\newcommand{\ignore}[1]{}
\renewcommand{\Im}{\operatorname{Im}}
\newcommand{\abs}[1]{\left\lvert {#1} \right\rvert}
\newcommand{\C}{{\mathbb{C}}}
\newcommand{\R}{{\mathbb{R}}}
\newcommand{\N}{{\mathbb{N}}}
\newcommand{\D}{{\mathbb{D}}}
\newcommand{\sE}{{\mathcal{E}}}
\newcommand{\sH}{{\mathcal{H}}}
\newcommand{\sO}{{\mathcal{O}}}
\newtheorem{thm}{Theorem}[section]
\newtheorem{prop}[thm]{Proposition}
\newtheorem{cor}[thm]{Corollary}    
\newtheorem{lemma}[thm]{Lemma}
\theoremstyle{definition}
\newtheorem{defn}[thm]{Definition}
\theoremstyle{remark}
\newtheorem{remark}[thm]{Remark}
\author{Ji\v{r}\'{\i} Lebl}
\thanks{The first author was in part supported by Simons Foundation collaboration grant 710294.}
\address{Department of Mathematics, Oklahoma State University,
Stillwater, OK 74078, USA}
\email{lebl@okstate.edu}
\author{Luka Mernik}
\address{Department of Applied Mathematics, Florida Polytechnic University,
Lakeland, FL 33805, USA}
\email{lmernik@floridapoly.edu}
\date{\today}
\title{On Segre-degenerate Levi-flat hypervarieties}
\keywords{Segre variety, singular Levi-flat hypersurface, Segre-degenerate}
\subjclass[2020]{32C07 (Primary),  32E20 32D10 14P15 (Secondary)}
\begin{document}

\begin{abstract}
We prove that a singular real-analytic Levi-flat hypersurface $H$ in $\C^n$ being
Segre-degenerate at a point $p$ is equivalent to the existence of a so-called
support curve, that is, a holomorphic curve that intersects $H$ at exactly
one point, which in turn is equivalent to the existence of support curves on at least
two sides of $H$ at $p$.  The existence of such two-sided support provides families
of analytic discs attached to $H$ that cover a neighborhood of $p$.  The existence
of such discs has two corollaries.  First,
any function holomorphic on a neighborhood of a Segre-degenerate $H$ extends to a fixed
neighborhood of $p$.  Second, the rational hull of $H$ is a neighborhood of $p$,
and thus no Levi-flat Segre-degenerate hypersurface in $\C^n$ can be rationally 
convex.
\end{abstract}

\maketitle

%\enlargethispage{\baselineskip}

%%%%%%%%%%%%%%%%%%%%%%%%%%%%%%%%%%%%%%%%%%%%%%%%%%%%%%%%%%%%%%%%%%%%%%%%%%%%

\section{Introduction} \label{section:intro}

A smooth hypersurface $H \subset \C^n$ is called Levi-flat if it is pseudoconvex from both sides, and one can prove that it is foliated by complex hypersurfaces.
If $H$ is real-analytic (and nonsingular), then $H$ is locally
biholomorphically equivalent to one given by $\Im z_1 = 0$.  In this case, any small analytic disc attached to $H$ lies entirely in $H$.
Interestingly, when $H$ is singular, this is no longer the case.
However, every real-analytic subvariety that is Levi-flat at regular points
does divide (locally) $\C^n$ into two or more pseudoconvex domains, and is
therefore ``Levi-flat'' even at the singular point (see e.g.,\ \cite{singularSetLeviFlat}).

%Cone example
Consider the hypersurface $H \subset \C^2$ defined by $\abs{z}^2-\abs{w}^2=0$.
This ``cone'' is a Levi-flat hypersurface with a singular point at the origin,
which is a so-called Segre-degenerate point, that is, the Segre variety degenerates there.
The Segre varieties outside the origin are precisely the complex lines through the origin.
Any one of the complex lines through the origin not contained in $H$ intersects $H$ only at the origin.
For a nonsingular $H$, such behavior would mean that $H$ is strictly pseudoconvex, however,
for this singular $H$, if we translate this complex line by a small amount in any direction it intersects
$H$ in a circle or an ellipse.  This behavior is reminiscent of what happens
for a strictly pseudoconvex nonsingular hypersurface when
translating in the pseudoconvex direction.  Through the translations, we obtain analytic discs attached to $H$. 
By taking two different lines in two different components of the complement and translating those, we fill an entire neighborhood of the origin
with continuous families of discs that shrink down to the origin.
In this case, the lines $\{ w = s \}$ and $\{ z = s \}$
for various constants $s$ are sufficient.
The rational hull of $H$ is thus $\C^2$, and even locally, the rational hull
of any compact neighborhood $K \subset H$ of the origin is a neighborhood of $0$
in~$\C^n$.
Locally, the two sides of the cone $H$ are Hartogs triangles.  What we prove
in this paper is that all Segre-degenerate Levi-flat hypersurfaces cut space
into pseudoconvex sets that behave like the Hartogs triangle in a certain sense.

We will investigate Levi-flat real-analytic subvarieties $H \subset \C^n$ of codimension 1.
We are interested in local behavior, and so we assume below that $H \subset U$ where $U$ is a small
neighborhood (most often a polydisc) of a point $p \in H$ and $H$ is closed in $U$.
We say that $H$ is \emph{curve-supported} at~$p$,
if there exists a holomorphic curve that only intersects $H$ at~$p$,
and we call such a curve a \emph{support curve}.
If $H$ is curve-supported at $p$ with two curves which lie in
two different components of the complement $U \setminus H$, then we say $H$
is \emph{curve-support from two sides} at $p$.
If $\rho$ is a defining function for $H$ at $p$, then the Segre variety at $p$
is the variety given by $\rho(z,\bar{p}) = 0$, and $H$ is
Segre-degenerate at $p$ if $\rho(z,\bar{p}) \equiv 0$.  See
sections \ref{section:prelim} and \ref{section:sc} for the rather subtle
details of these definitions in the singular setting.

Local properties of real-analytic singular Levi-flat hypersurfaces
and their study using Segre varieties was started by Burns--Gong~\cite{burnsgong:flat}. Many other researchers have made recent contributions, see, among others,
Brunella~\cite{foliationExt},
Cerveau--Lins Neto~\cite{CerveauLinsNeto11},
Fern\'{a}ndez-P\'{e}rez~\cite{FP14},
Shafikov--Sukhov~\cites{SS15,hullsLeviFlat},
Pinchuk--Shafikov--Sukhov~\cite{dicritical},
Beltr\'{a}n--Fern\'{a}ndez-P\'{e}rez--Neciosup~\cite{BFN:18},
Lebl~\cite{singularSetLeviFlat},
and the references within.
It is not difficult to see that, locally, a nonsingular
Levi-flat hypersurface is polynomially convex.
Shafikov and Sukhov \cite{hullsLeviFlat} showed that a
Levi-flat Segre-degenerate hypersurface cannot be polynomially or
rationally convex. In this paper, we extend their result in the
Segre-degenerate case. 
We also recover the result from Pinchuk--Shafikov--Sukhov~\cite{dicritical}
that for a Levi-flat hypersurface Segre-degenerate at $p$,
infinitely many leaves of the Levi foliation contain $p$,
that is, $p$ is a dicritical point.

With the example above in mind, an interesting question becomes:
Which properties of the cone are responsible for the existence of support curves in two components of the complement?
The main theorem of this paper answers this question for Levi-flat hypersurfaces. Namely,

    \begin{thm}\label{thm:main}
        Let $H \subset U \subset \C^n$ be a singular real-analytic Levi-flat hypersurface, $p \in H$,
        and the germ of $H$ is irreducible at $p$.
        The following are equivalent
        \begin{enumerate}
            \item $p$ is a Segre-degenerate point of $H$,
            \item $H$ is curve-supported at $p$,
            \item $H$ is curve-supported from two sides at $p$, and
            \item $p$ is a dicritical singularity of $H$.
        \end{enumerate}
    \end{thm}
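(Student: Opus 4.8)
The plan is to close the cycle of implications $(1)\Rightarrow(4)\Rightarrow(3)\Rightarrow(2)\Rightarrow(1)$; of these $(3)\Rightarrow(2)$ is immediate from the definition of two‑sided curve support, and $(1)\Rightarrow(4)$ is short. For the latter, let $\rho$ be a defining function for $H$ near $p$ and $\hat\rho(z,\bar w)$ its complexification, which satisfies $\hat\rho(z,\bar w)=\overline{\hat\rho(w,\bar z)}$. Segre‑degeneracy means $\hat\rho(\,\cdot\,,\bar p)\equiv 0$, so for every $q$ near $p$ we get $\hat\rho(p,\bar q)=\overline{\hat\rho(q,\bar p)}=0$; that is, $p$ lies on (the closure of) the Segre variety $\Sigma_q$. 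When $q$ is a regular point of $H$, $\Sigma_q$ is a piece of the leaf of the Levi foliation through $q$, and as $q$ runs over the regular points of $H$ near $p$ one meets infinitely many distinct leaves, each having $p$ in its closure. Hence $p$ is dicritical.

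The heart of the argument is $(4)\Rightarrow(3)$. I would pass to the germ at $p$ of the singular holomorphic foliation $\sF$ extending the Levi foliation of $H_{\mathrm{reg}}$, available by Brunella \cite{foliationExt} and Cerveau--Lins Neto \cite{CerveauLinsNeto11}. Dicriticality of $p$ gives, after a finite sequence of blow‑ups $\pi$, an exceptional component $E$ which is not $\sF$‑invariant, so the transformed foliation is transverse to $E$ at its generic points, and the leaves of $\sF$ through those points push down by $\pi$ to germs at $p$ of distinct analytic curves. The proper transform $\tilde H$ is a union of leaves, so $\tilde H\cap E$ is a proper real‑analytic subset of $E$ (proper, since $E$ is not $\sF$‑invariant while $\tilde H$ is). Because $U\setminus H$ has at least two components $\Omega_1,\dots,\Omega_r$, each with $p$ in its closure, the transforms of $\Omega_1,\dots,\Omega_r$ meet $E$ in nonempty, pairwise disjoint open subsets of $E\setminus(\tilde H\cap E)$. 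I would choose generic points $e_1,e_2$ in the parts of $E$ lying over $\Omega_1$ and $\Omega_2$, take the leaves $L_1,L_2$ of $\sF$ through them, and set $\gamma_i=\pi(L_i)$. Since two distinct leaves of a foliation meet only in the singular locus, $L_i$ avoids the leaves making up $\tilde H$ off $E$; hence $\gamma_i\setminus\{p\}\subseteq\Omega_i$ and $\gamma_i\cap H=\{p\}$, so $\gamma_1,\gamma_2$ are support curves on two different sides.

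For $(2)\Rightarrow(1)$ I would argue the contrapositive. If $p$ is not Segre‑degenerate, then $\Sigma_p=\{\hat\rho(\,\cdot\,,\bar p)=0\}$ is a proper complex‑analytic hypersurface germ, and in fact $\Sigma_p\subseteq H$: it is the Hausdorff limit of the Segre varieties $\Sigma_q$ of regular points $q\to p$, each of which is a piece of a leaf and hence lies in the closed set $H$. Using the structure theory of non‑dicritical real‑analytic Levi‑flat germs (Cerveau--Lins Neto \cite{CerveauLinsNeto11}), one may write $H=\{\Re h=0\}$ near $p$ for a holomorphic $h$ vanishing at $p$, so $\hat\rho$ carries no terms of mixed bidegree. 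If $\gamma$ were a support curve with parametrization $\phi$, then either $h\circ\phi\equiv 0$, so $\gamma$ lies in the leaf $\{h=h(p)\}\subseteq H$, contradicting $\gamma\cap H=\{p\}$; or $h\circ\phi$ vanishes to finite order $N\ge 1$ at the center, in which case $\Re\bigl(h(\phi(\zeta))\bigr)$ changes sign near $\zeta=0$, so $\gamma$ meets $H$ along a real curve through $p$ — again a contradiction.

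Finally, $(1)\Leftrightarrow(4)$ recovers the dicriticality statement of Pinchuk--Shafikov--Sukhov \cite{dicritical}, and translating the two support curves of $(3)$ slightly off $p$ produces the advertised continuous families of analytic discs attached to $H$, whence the extension and rational‑convexity corollaries. The main obstacle is $(4)\Rightarrow(3)$: controlling the resolution of $\sF$ and the transversality to a dicritical component, and, above all, verifying both that the complex leaves genuinely populate at least two of the components $\Omega_i$ and that each such leaf meets $H$ only at $p$ — which forces one to handle the possibly positive‑dimensional singular loci of $H$ and of $\sF$ carefully and to make the choice of the base points $e_i$ judiciously. A secondary point is whether every non‑Segre‑degenerate Levi‑flat germ truly has a holomorphic first integral; if not, $(2)\Rightarrow(1)$ must instead compare the vanishing orders along $\gamma$ of the holomorphic part of $\rho$ and of its mixed part, using the (necessarily Levi‑flat) normal form of the tangent cone of $H$ at $p$ to dispose of the borderline case.
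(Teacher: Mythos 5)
Your plan is a cycle $(1)\Rightarrow(4)\Rightarrow(3)\Rightarrow(2)\Rightarrow(1)$, which is logically fine, but each of the three nontrivial arrows has a genuine gap, and in two cases the gap is a false intermediate claim. For $(1)\Rightarrow(4)$ you assert that at a regular point $q$ the Segre variety $\Sigma_q$ ``is a piece of the leaf through $q$,'' and conclude from $p\in\Sigma_q$ that $p$ lies in the closure of that leaf. This is backwards: $\Sigma_q$ (taken with respect to the fixed defining function $\rho$ near $p$, which is the only way to make sense of $\Sigma_q$ for $q\ne p$, since $H$ need not be coherent) merely \emph{contains} the leaf as one of possibly many components, so $p\in\Sigma_q$ tells you nothing about the particular component carrying the leaf. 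This implication is essentially the Pinchuk--Shafikov--Sukhov dicriticality theorem and is not a three-line consequence of the reality of $\rho$; the paper only obtains it after first building a support curve and proving a plurisubharmonic support lemma. For $(2)\Rightarrow(1)$ you invoke a normal form $H=\{\Re h=0\}$ for non-Segre-degenerate Levi-flat germs; no such first integral exists in general (you flag this yourself), and your fallback via vanishing orders is not carried out. For $(4)\Rightarrow(3)$ you need the Levi foliation to extend to a singular holomorphic foliation of a full neighborhood of $p$; as the paper notes, this extension is available only under hypotheses on $H^{\mathrm{sing}}$, and in general one only gets a web (Brunella). Even granting the extension, the key claims --- existence of a non-invariant exceptional divisor, and that leaves through generic points of it push down to curves meeting $H$ only at $p$ and distributed into at least two components of $U\setminus H$ --- are exactly the hard part and are left unverified.

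The paper's route is different and avoids all of this machinery. It proves $(1)\Rightarrow(2)$ by a Baire-category argument: the set of points whose principal Segre component meets $H\setminus\{p\}$ is meagre, so a generic principal component is a support curve. It then proves a support lemma: the complement of a support curve $C$ is pseudoconvex, hence carries a plurisubharmonic exhaustion blowing up on $C$, and the maximum principle forces every closed complex curve lying in the other components (in particular, limits of Levi leaves and the components of Segre varieties contained in $H$) to pass through $p$; this yields $(3)$ and $(4)$ simultaneously, and $(4)\Rightarrow(1)$ is then the easy direction. The passage to $\C^n$ is by straightening the support curve with a finite branched cover (Puiseux) and slicing with a $2$-plane --- a reduction entirely absent from your proposal. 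If you want to salvage your outline, the psh support lemma is the tool that replaces both your resolution-of-the-foliation step and your first-integral step.
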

    
As we mentioned,
boundaries of smooth strictly pseudoconvex domains are 
curve-supported, but cannot be curve-supported from two sides.
If a hypersurface is curve supported from two sides, then we obtain a large
rational hull.

\begin{prop} \label{prop:scopenhull}
Suppose $H \subset U \subset \C^n$ is a singular real-analytic hypersurface that is curve supported
from two sides at $p \in H$.  Then for any compact neighborhood $K \subset H$ of $p$, the polynomial
hull $\hat{K}$ and the rational hull $\hat{K}_\text{rat}$ is a
neighborhood\footnote{By ``neighborhood of $p$,'' we mean, as usual, ``contains an open set containing $p$.''}
of $p$ in $\C^n$.
\end{prop}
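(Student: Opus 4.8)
The plan is to exploit the two support curves to build two continuous families of analytic discs attached to $H$, one family from each side, that together sweep out a full neighborhood of $p$, and then invoke Oka's characterization of polynomial/rational hulls via analytic discs (or more directly, the maximum principle for $|f|$ on discs with boundary in $K$). First I would set up coordinates: let $\gamma_1, \gamma_2$ be support curves lying in two distinct components $\Omega_1, \Omega_2$ of $U \setminus H$, each meeting $H$ only at $p$. Parametrize a piece of $\gamma_i$ by a holomorphic map $\varphi_i \colon \overline{\D} \to U$ with $\varphi_i(0) = p$ (after shrinking, we may assume $\varphi_i(\overline{\D}\setminus\{0\}) \subset \Omega_i$, so the boundary circle $\varphi_i(\partial\D)$ lies in the fixed side $\Omega_i$ and stays in $K$ if we scale down). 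The key geometric input is that $H$, being Levi-flat and separating $U$ locally into pseudoconvex pieces, forces the discs obtained by small perturbations of $\varphi_i$ to remain attached to $H$ in the sense needed; more precisely, I want to perturb $\varphi_i$ in the normal directions to obtain, for parameters $s$ near $0$, holomorphic discs $\varphi_i^s$ whose boundaries lie in $K \subset H$ and whose interiors cover a one-sided neighborhood of $p$ lying in $\overline{\Omega_i}$.

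The mechanism for producing these discs is translation in the ``pseudoconvex direction,'' exactly as in the cone example in the introduction: since each $\Omega_i$ is pseudoconvex and $\gamma_i$ touches its boundary $H$ only at $p$, pushing $\gamma_i$ slightly into $\C^n$ produces a curve that either misses $H$ entirely (staying in $\Omega_i$) or meets $H$ in a small compact loop bounding a disc; the Levi-flatness guarantees we are in a situation analogous to strict pseudoconvexity transverse to the leaves. Concretely, I would consider the family of translates $\varphi_i(\zeta) + v$ for $v$ in a small ball of $\C^n$; for a positive-measure (indeed, open) set of directions $v$ the translated disc meets $H$ in a compact subset near $p$ and, by Levi-flatness, an appropriate sub-disc has its entire boundary on $H$ near $p$, hence in $K$. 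The union over such $v$ of the interiors of these discs contains a one-sided neighborhood $N_i \subset \overline{\Omega_i}$ of $p$. Doing this on both sides and using that $\Omega_1, \Omega_2$ are on opposite sides of the hypersurface $H$ through $p$, the set $N_1 \cup N_2 \cup (H\cap U)$ contains a full neighborhood of $p$ in $\C^n$.

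Finally, to conclude the hull statement: for any $f$ holomorphic on a neighborhood of $K$ (respectively rational with poles off $K$), the maximum principle applied to $f \circ \varphi_i^s$ on $\overline{\D}$ gives $|f(\varphi_i^s(\zeta))| \le \max_{\partial\D}|f \circ \varphi_i^s| \le \sup_K |f|$ for all interior $\zeta$, since the boundary of each disc lies in $K$. Therefore every point of $N_1 \cup N_2$, and trivially every point of $H \cap U$ near $p$, lies in $\hat{K}$, and likewise in $\hat{K}_{\mathrm{rat}}$ (rational functions with poles disjoint from $K$ are holomorphic on a neighborhood of each disc if the discs are chosen small enough to avoid the poles, which we can arrange by shrinking). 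Hence $\hat{K}$ and $\hat{K}_{\mathrm{rat}}$ contain a neighborhood of $p$.

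The main obstacle I anticipate is Step two: rigorously producing the perturbed family of attached discs and showing their interiors cover a one-sided neighborhood of $p$. The subtlety is that $H$ is singular at $p$ and the support curve passes through the singular point, so one cannot naively apply the implicit function theorem or standard Bishop-type disc-attachment results near $p$. The honest argument presumably goes through the local structure of a Levi-flat real-analytic hypersurface near a singular point — using that away from the singular locus $H$ is foliated by complex hypersurfaces and each $\Omega_i$ is pseudoconvex — to control how the translated curve $\varphi_i(\partial\D) + v$ intersects $H$, perhaps after a normalization of the Levi foliation. Getting the covering property (that the discs' interiors really fill a half-neighborhood, with no gaps) is where the geometry of ``two sides'' and pseudoconvexity must be used carefully; this is the technical heart of the proposition and where I would spend the bulk of the effort.
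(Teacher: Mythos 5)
Your overall strategy---translate the support curves to produce families of analytic discs attached to $H$, then pass to the hulls---is the paper's strategy, but two of your steps contain genuine errors. First, the orientation of the covering is backwards. If $\gamma_i$ is the support curve lying in the component $\Omega_i$, then for the translate $\gamma_i + v$ to be forced to cross $H$ near $p$ you must take $p+v \in \Omega_i^c$: the bulk of $\gamma_i+v$ still lies in $\Omega_i$ for small $v$, so if it also passes through a point of $\Omega_i^c$ it must meet $H$, and since $\gamma_i\cap H=\{p\}$ the intersection $(\gamma_i+v)\cap H$ is a compact real curve close to $p$; the component of the parameter domain avoiding $H$ and containing the preimage of $p+v$ is then a relatively compact region with boundary mapped into $H\cap K$, which one uniformizes by the Riemann mapping theorem. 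No Levi-flatness and no Bishop-type perturbation theory is needed---which is why the proposition is stated for arbitrary real-analytic hypersurfaces. If instead $p+v\in\Omega_i$, the translate may miss $H$ entirely and no attached disc is produced. Consequently the discs built from $\gamma_i$ pass through the points of $\Omega_i^c\cap B_\epsilon(p)$, not through a one-sided neighborhood inside $\overline{\Omega_i}$ as you claim; this matters because $U\setminus H$ may have more than two components, so $\overline{\Omega_1}\cup\overline{\Omega_2}$ need not be a neighborhood of $p$, whereas $\Omega_1^c\cup\Omega_2^c$ is everything since $\Omega_1\cap\Omega_2=\emptyset$. (Check against the model cone $\abs{z}^2=\abs{w}^2$: translating $\{w=0\}\subset\{\abs{z}>\abs{w}\}$ to $\{w=s\}$ yields the attached disc $\{\abs{z}<\abs{s},\ w=s\}$, which lies entirely on the opposite side.)

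Second, your treatment of the rational hull does not work: given $q$ and a polynomial $f$ with $f(q)=0$ and $f\neq 0$ on $K$, the zero set of $f$ passes through the very point $q$ that your disc must contain, so you cannot ``shrink the disc to avoid the poles'' of $1/f$, and the maximum principle is unavailable. The correct argument uses the fact that the construction produces a continuous family of attached discs shrinking to the point $p$ as $v\to 0$: the winding number of $f$ along the disc boundaries is a continuous integer-valued function of the parameter, equal to $0$ for the degenerate disc, hence equal to $0$ for the disc through $q$, contradicting $f(q)=0$. This argument-principle step, recorded in the paper's preliminaries, is where the family shrinking to a point---not merely the existence of a single attached disc---is actually used.
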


Hence, the polynomial hull and the rational hull of any Levi-flat in $\C^n$,
Segre-degenerate at $p$, is a neighborhood of $p$.
In particular, a Segre-degenerate Levi-flat hypersurface in $\C^n$,
despite being a union of complex hypersurfaces,
will never be polynomially nor rationally convex.

The construction of the hull using the support curves does not simply yield
analytic discs; it yields families of analytic discs shrinking to $p$.
Therefore, applying (a version of) Kontinuit\"{a}tssatz gives us the
following corollary for extending holomorphic functions.

\begin{cor}\label{cor:extension}
Suppose $H \subset U \subset \C^n$ is a singular real-analytic hypersurface that is
curve supported from two sides at $p \in H$.
Then there exists a fixed neighborhood $\Omega$ of $p$ in $\C^n$, such that
every function in $f \in \sO(H)$ extends to a holomorphic function on $\Omega$.
\end{cor}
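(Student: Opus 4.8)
The plan is to exploit the family of analytic discs produced in the proof of Proposition~\ref{prop:scopenhull} together with a Kontinuit\"{a}tssatz-type argument. First, since $H$ is curve-supported from two sides at $p$, there are holomorphic curves $C_1, C_2$ in two distinct components of $U \setminus H$ meeting $H$ only at $p$. As in the cone example in the introduction, translating (or more precisely, perturbing within a holomorphic family) each $C_i$ in directions transverse to $H$ yields, for each side $i$, a continuous one- or several-parameter family of analytic discs $\{ \Delta_{i,t} \}_{t \in T_i}$ whose boundaries $\partial \Delta_{i,t}$ lie on (a compact neighborhood $K$ of $p$ in) $H$, which sweep out an open subset of the corresponding component of $U \setminus H$, and which shrink down to $p$ as $t$ tends to a limiting parameter value. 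The union of these two families, together with $K$ itself, covers a full neighborhood $\Omega_0$ of $p$ in $\C^n$; this is exactly the content (and the construction) behind Proposition~\ref{prop:scopenhull}.

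Next I would fix a smaller neighborhood $\Omega \Subset \Omega_0$ of $p$ and invoke the continuity principle for holomorphic extension. Given $f \in \sO(H)$, by definition $f$ is holomorphic on some neighborhood $W$ of $H$ in $\C^n$; after shrinking we may assume the boundaries $\partial \Delta_{i,t}$, which lie in $K \subset H$, sit inside $W$, and moreover that the "seed" discs (those with large $t$, close to $C_i$ away from $p$) lie entirely in $W$ as well, since $C_i \setminus \{p\}$ is in the complement of $H$ but the whole curve is arbitrarily close to $p$. By Cauchy's integral formula on each disc, $f$ extends holomorphically across the interior of each $\Delta_{i,t}$; because the family is continuous in $t$ and starts inside $W$ where $f$ is already defined, these disc-wise extensions are mutually compatible and patch together with $f$ on $W$ to give a single holomorphic function on $W \cup \bigl( \bigcup_{i,t} \Delta_{i,t} \bigr) \supset \Omega$. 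The extension is defined on the fixed neighborhood $\Omega$, independently of $f$, which is the assertion.

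The main obstacle is making the patching rigorous: one must ensure that the analytic discs in each family form a genuine \emph{continuous} (ideally, smooth or even holomorphic-in-parameter) family with boundaries staying in a fixed compact $K \subset H$ and interiors filling an open set, and that the two families overlap with the initial domain $W$ of definition of $f$ in a connected way so that monodromy is not an issue — i.e.\ one needs the parameter spaces $T_i$ to be connected and the discs to vary so that the union of interiors, together with $W$, is connected. This is precisely why the statement is phrased with discs \emph{shrinking to} $p$ rather than a single disc: the shrinking families guarantee that near $p$ the filled region is reached continuously from discs whose boundaries are close to $p$ in $H$, where $f$ is defined, so no global monodromy obstruction arises. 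A secondary technical point is uniformity: the size of $\Omega$ must depend only on $H$ (through the support curves and the geometry of the families), not on $f$; this follows because the disc families are constructed once and for all from $H$, and $f$ only enters through the Cauchy integral over the already-fixed boundaries.
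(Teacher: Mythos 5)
Your proposal follows essentially the same route as the paper: translate the two support curves to produce continuous families of analytic discs attached to $H$ and shrinking to $p$ on both sides (Theorem~\ref{thm:hullopennbhd} and Corollary~\ref{cor:polyhullsattacheddisc}), note that the small discs eventually lie in the domain $U_f$ of $f$, and apply the Kontinuit\"atssatz together with a connectedness/monodromy argument to get a single-valued extension on a fixed $\Omega$. The only phrasing to tighten is the appeal to ``Cauchy's integral formula on each disc,'' which by itself does not extend $f$ to an open set in $\C^n$ --- the extension really comes from the continuity principle applied to the whole shrinking family, which you also invoke, and the monodromy issue you flag is resolved in the paper by observing that the continuation paths are transverse to spheres centered at $p$.
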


By $\sO(H)$ we mean restrictions of holomorphic functions;
each such $f$ is holomorphic in some neighborhood of $H$.
The key point of the corollary is that $\Omega$ is
a fixed neighborhood of $p$.

The structure of the paper is the following.
In Section \ref{section:prelim},
we establish notation and summarize well-known facts.
Sections \ref{section:sc} and \ref{section: Cn} are used to prove \ref{thm:main} by constructing desired curves for hypersurfaces in questions. As an application, we prove Proposition \ref{prop:scopenhull} and Corollary \ref{cor:extension} in Section \ref{section:polyhulls}.

%%%%%%%%%%%%%%%%%%%%%%%%%%%%%%%%%%%%%%%%%%%%%%%%%%%%%%%%%%%%%%%%%%%%%%%%%%%%
\section{Preliminaries} \label{section:prelim}

Suppose $U \subset \C^n$ and $\rho \colon U \to \R$ is a real-analytic
function.
Let $H=\{z\in U:\rho(z,\bar z)=0\}\subset U$ be a real-analytic
singular hypersurface.
When we write ``$H \subset \C^n$ is a hypersurface,''
we will have such a setup in mind.
A point $p\in H$ is regular if there is a
neighborhood $V$ of $p$ such that $H\cap V$ is a real-analytic submanifold.
The set of regular points of $H$ is denoted $H^{\text{reg}}$ and its
complement, the set of singular points, by $H^{\text{sing}}=H\setminus
H^{\text{reg}}$. 

The dimension of $H$ at $p\in H^{\text{reg}}$, denoted by $\dim_p H$, is the
real dimension of the real-analytic manifold at $p$,
and $\dim H$, the
dimension of $H$, is the maximum dimension at any regular point.
Denote by $H^*=\{p\in H^{\text{reg}}: \dim_p H=\dim H \}$ the
set of top-dimensional points, 
for us, $\dim H^* = \dim H = 2n-1$.
Even irreducible real-analytic sets do not always have pure
dimension. A classic example of such variety are
the so-called Cartan or Whitney umbrellas.
Moreover, while we can take $\rho$ to be the defining function at some $p$,
that is, we can assume that $\rho$ generates the ideal of germs at $p$ of
real-analytic functions vanishing on $H$, we cannot guarantee that
$\rho$ generates this ideal at points arbitrarily near $p$.
In particular, it is not true in general that $H^*$ is the set of points
where the gradient of $\rho$ is nonzero.

Holomorphic functions on $H$ are denoted by $\sO(H)$. A function $f\in
\sO(H)$ if $f$ is holomorphic in some neighborhood of $H$.  The envelope of
holomorphy of a domain $U$, denoted by $\sE(U)$, is the largest branched
domain (holomorphic covering space with finite fibers)
such that every holomorphic function on $\sE(U)$ extends to a holomorphic
function on $U$. Therefore, $\sE(U)$ is itself a domain of holomorphy. 

Throughout the paper $B_r(p)=\{z\in \C^n: ||z-p||<r\}$ denotes the open ball of radius $r$ centered at $p$.

\subsection{Levi-flat hypersurfaces}

Regular hypersurface $H$ is called Levi-flat if it is pseudoconvex from both
sides.  A singular hypersurface $H$ is Levi-flat if the set of regular
points of top dimension, $H^*$, is Levi-flat.
A ``model'' Levi-flat hypersurface is
\begin{equation}
M_n=\{(z_1,\dots,z_{n})\in \C^n : \Im z_1=0\}.
\end{equation}
For every $p \in H^{*}$, there is a neighborhood $U$ of $p$ and a biholomorphism
$\phi \colon U \to V$ such that $\phi(H\cap U)=M_n\cap V$. See, for
instance,~\cite{realsubbook}.

This biholomorphism induces a foliation of $H^{*}$ by complex hypersurfaces
called the Levi foliation.
The natural question arises: Does this foliation extend holomorphically to
some neighborhood of $H$? 
The first author~\cite{singularSetLeviFlat} and independently
Cerveau--Lins Neto~\cite{CerveauLinsNeto11} showed that if the singular set
is small enough, the Levi foliation extends as a singular holomorphic
foliation in a neighborhood of the hypersurface.  In general, an extension
of the foliation is not possible, but one can extend to a web, see
Brunella~\cite{foliationExt}.

\subsection{Complexification and Segre varieties}

Let $\Delta \subset \C^n$ be a polydisc centered at $0$,
suppose $\rho \colon \Delta \to \R$ is a real-analytic function vanishing
at the origin, and suppose that the power series of $\rho$ at the origin
converges in $\Delta$.  The power series $\rho(z,\bar{z})$ then converges
in $\Delta \times \Delta$ if we treat $\bar{z}$ as a separate variable, that
is, $\rho(z,w)$ is well-defined for $(z,w) \in \Delta \times \Delta$.
We remark that it is possible to use different neighborhoods than a
polydisc where the power series converges,
but as we are interested in local properties, it is sufficient
to assume the simple geometry of the polydisc and to assume it is
sufficiently small.

Our hypersurface is defined by
\begin{equation}
H = \{ z \in \Delta : \rho(z, \bar{z})=0\},
\end{equation}
and we will assume that $\rho$ generates the ideal of real-analytic
germs at $0$ vanishing on $H$.

The complexification of $H$ is 
\begin{equation}
{\mathcal{H}}=\{(z,w)\in \Delta\times\Delta : \rho(z, w)=0\}.
\end{equation}
The hypersurface $H$ lifts to the diagonal of the complexification by $i(H)$,
where $i(z)=(z,\bar{z})$.
The complexification is equipped with two projections
$\pi_z(z,w)=z$ and $\pi_w(z,w)=w$.
\begin{equation}
\begin{tikzcd}
&\mathcal H \arrow[ld,"\pi_z"] \arrow[rd,"\pi_w"] &\\
\C_z^n&& \C_w^n
\end{tikzcd}
\end{equation}
Note that we take $\pi_z$ and $\pi_w$ to have $\sH$ as their domain.
Using these projections, we define Segre varieties.

\begin{defn}
The Segre variety (with respect to $\rho$) at $q$ is given by
\begin{equation}
\Sigma_q=\pi_z\circ \pi_w^{-1}(\bar q).
\end{equation}
For a hypersurface, this is equivalent to 
\begin{equation}
\Sigma_q=\{z\in \Delta: \rho(z,\bar q)=0\} .
\end{equation}
\end{defn}

$H$ need not be a so-called coherent variety, and in particular
$\rho$ need not generate the ideal of germs vanishing on $H$ at all points.
The germ of the Segre variety at the origin is well-defined.
However, the germ of the Segre variety at some other point $q \in H$ could
be in fact smaller than the germ $\Sigma_q$
if we took a small neighborhood of $q$ and repeated the construction above.
See e.g.,~\cite{Lebl:semianal} for more on these issues. 
For the purposes of our work here, we will only take the Segre varieties
with respect to $\rho$.

Segre varieties may have multiple components.
We will call the component of $\Sigma_p$ that contains $0$ the principal
component of $\Sigma_p$.

The reality of $\rho$ implies the following
\begin{enumerate}
    \item $p\in \Sigma_p$ if and only if $p\in H$, and
    \item $p\in \Sigma_q$ if and only if $q\in \Sigma_p$.
\end{enumerate}

Segre varieties are a widely used tool for dealing with real-analytic
submanifolds in complex manifolds. Their application to CR geometry was
popularized by Webster~\cite{onmapping} and
Diederich and Fornaess~\cite{pcxwithrealbound}.
When applying these techniques to singular
varieties, one must be very careful.
Firstly, points can be degenerate,
meaning the Segre varieties are of ``wrong'' dimension.
Secondly, as we mentioned above, variety may not be coherent,
meaning the Segre varieties cannot be defined
by the same function at all points. 

\begin{defn}
A point $p\in H\subset \Delta$ is \emph{Segre-degenerate} (with respect to $\rho$)
if $\dim \Sigma_p=n$.

A point $p$ is a \emph{dicritical singularity}
if $p$ belongs to the closure of infinitely many geometrically different
leaves of the Levi foliation of $H^*$.
\end{defn}

A point may be Segre-degenerate with respect to one function and not with
respect to another.  If one defines \emph{Segre-degenerate} for germs
(and hence independently of $\Delta$ and $\rho$) using
the defining function at each point, then one may obtain fewer such points.
However, we are assuming that $\rho$ is the defining function for the
ideal of $H$ at $0$ and thus $H$ is Segre-degenerate in the germ
sense at the origin if and only if $H$ is Segre-degenerate at $0$ with respect to
$\rho$.  The set of Segre-degenerate points in the germ sense
is a semianalytic subset that is contained in a complex analytic
subvariety of dimension at most $n-2$, see~\cite{Lebl:semianal}.
On the other hand, it is easy to see that the
set of Segre-degenerate points with respect to $\rho$ is in fact
a complex analytic subvariety of at most dimension $n-2$.
In particular, in $\C^2$ Segre-degenerate points are isolated.

If $H$ is Levi-flat, some components of Segre varieties are easier to
describe. More specifically, a leaf of the Levi foliation near a regular
point is contained in a component of a Segre variety.  For this reason, a
dicritical point $p$ must be Segre-degenerate in the germ sense, and hence
also with respect to $\rho$.

\subsection{Polynomial Hulls, Rational Hulls, and Attached Discs}

\begin{defn}
Given a (pre)compact set $H$, the polynomial hull of $H$ is
\begin{equation}
\hat{H}=\left\{ z\in \C^n : |p(z)|\leq \sup_{w\in H}|p(w)|, \text{ where $p$ is a holomorphic polynomial}
\right\}.
\end{equation}
The rational hull of $H$ is given by 
\begin{equation}
\hat{H}_{\text{rat}}=\left\{z\in\C^n:p(z)\in p(H) \text{ for all holomorphic polynomials}\right\}.
\end{equation}
$H$ is said to be polynomially convex if $\hat H=H$ and rationally convex if $\hat H_{\text{rat}}=H$.
\end{defn}

A different description of rational hull will be more useful in the context
of this paper. The rational hull of $H$ is the set of all points $z\in \C^n$ for
which one cannot find a complex algebraic hypersurface that passes through
$z$ and does not intersect $H$.

The fact that the polynomial hull contains the rational hull
follows immediately from the definitions.
In particular, if the rational hull contains an open neighborhood of $p$,
then so does the polynomial hull. Additionally,
if $H$ is not rationally convex, it is not polynomially convex either.

Polynomial convexity and rational convexity play an important role in generalizing Runge's approximation theorem to several variables.
The Oka-Weil theorem states that on compact, polynomially convex subsets of $\C^n$, holomorphic functions can be approximated uniformly by holomorphic polynomials. Likewise, holomorphic functions on a rationally convex set $H$ can be approximated uniformly by rational functions with poles off $H$. See, for example, Stout \cite{Stout}.

\begin{defn}
An analytic disc is (an image of) a non-constant holomorphic mapping
$\varphi \colon \D\to \C^n$.  If the map extends continuously to $\bar \D$ and
$\varphi(\partial \D)\subset H$, then we say that the disc $\varphi$ is attached
to $H$.
\end{defn}

As a consequence of the Maximum modulus theorem,
attached analytic discs are contained
in the polynomial hull.
In particular, if $\varphi(\D)$ is an attached disc to $H$,
then $\varphi(\D)\subset \hat{H}$.
If $H$ is not polynomially convex, a question about a structure of
$\hat{H}\setminus H$ or $\hat{H}$ arises. In particular, does
$\hat{H}\setminus H$ or $\hat{H}$ contain analytic structure, or can we
obtain $\hat{H}$ by attaching discs to $H$?  Stolzenberg~\cite{hullnodisc},
and many since, gave examples of polynomial hulls containing no analytic
discs. 

If we have a continuous family $\varphi_t$, $t \in [0,1]$,
of analytic discs attached to $H$
such that $\varphi_0$ is constant (and hence contained in $H$), then
$\varphi_1(\D) \subset \hat{H}_{\text{rat}}$.  This fact follows 
by, for example, applying the Argument principle to $f \circ \varphi_t$
for a polynomial $f$ that does not vanish on $H$.

%%%%%%%%%%%%%%%%%%%%%%%%%%%%%%%%%%%%%%%%%%%%%%%%%%%%%%%%%%%%%%%%%%%%%%%%%%%%%
\section{Support Curves in \texorpdfstring{$\C^2$}{C2}} \label{section:sc}

We begin by defining support curves, the central objects of this paper.

\begin{defn}
Let $H\subset \C^n$ be a hypersurface.
A holomorphic curve $C$ is called a support curve for $H$ at $p$ if there
exists a neighborhood $U$ of $p$ such that $C\cap U\cap H=\{p\}$. We say $H$
is curve-supported at $p$ if there exists a support curve for $H$ at $p$.
We say $H$ is curve-supported from two sides at $p$
if there exist two support curves $C_1$ and $C_2$ at $p$
and a neighborhood $U$ of $p$ such that $C_1 \setminus \{ p\}$
and $C_2 \setminus \{ p \}$ lie in distinct topological components
of $U \setminus H$.
\end{defn}

We will from now on say that a support curve $C$ is in a component of the
complement if $C \setminus \{ p \}$ is a subset of that component.
Let $V$ be a component of the complement $U \setminus H$ and $C \subset U$
be a holomorphic curve through $p$.
Then $C \setminus \{p\}$ is contained in $V$
if $C \setminus V \subset \{p\}$ or $C \subset V \cup \{p\}$.
A holomorphic curve that intersects two different components of the
complement $U \setminus H$ and intersects $H$ at at least one point
must intersect $H$ at infinitely many points.
Note that a complement of a singular hypersurface may have more than two components.

First, we focus on hypersurfaces in $\C^2$.
Let $H\subset \Delta \subset \C^2$ be a Levi-flat hypersurface
with Segre-degenerate point at the origin, and we fix a defining function $\rho$ as in
section~\ref{section:prelim}.
In $\C^2$, Segre varieties at non Segre-degenerate points are unions of complex curves. Using these curves, we show that $H$ is curve-supported at the origin.  

%%%Old version
\begin{prop}\label{prop:onesc}
Suppose that $H\subset \C^2$ is a Levi-flat real-analytic hypersurface that is Segre-degenerate at $q$. Then $H$ is curve-supported at $q$.
\end{prop}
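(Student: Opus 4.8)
The plan is to translate so that $q=0$, fix a defining function $\rho$ for $H$ on a small polydisc $\Delta$ together with its complexification $\rho(z,w)$ as in Section~\ref{section:prelim}, and produce the support curve as an irreducible component through $0$ of the Segre variety $\Sigma_{q'}$ for a suitably chosen point $q'$ near $0$. Segre-degeneracy at $0$ means $\Sigma_0=\{z:\rho(z,0)=0\}$ has dimension $n=2$, i.e.\ $\rho(z,0)\equiv 0$; by the reality of $\rho$ this also forces $\rho(0,w)\equiv 0$, and hence $0\in\Sigma_{q'}$ for \emph{every} $q'\in\Delta$ --- every Segre variety with respect to $\rho$ runs through $0$. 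Since the Segre-degenerate points with respect to $\rho$ form a complex subvariety of dimension at most $n-2=0$, they are isolated, so after shrinking $\Delta$ we may assume $0$ is the only one; then for $q'\in\Delta\setminus\{0\}$ the variety $\Sigma_{q'}$ is a pure $1$-dimensional complex curve, and each of its irreducible components through $0$ is a nonconstant holomorphic curve through $0$ --- our candidate for a support curve.

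Levi-flatness enters through the following observation: if $z\in H^*$, then $H$ is locally biholomorphic to $M_2$, a short computation in the model shows the germ of $\Sigma_z$ at $z$ lies inside $H$, and since an irreducible component of $\Sigma_z$ is, via its normalization, parametrized by a connected domain on which a real-analytic function vanishing on an open set vanishes identically, the whole component of $\Sigma_z$ through $z$ is contained in $H$. Now take $q'\in\Delta\setminus H$ and let $C$ be a component of $\Sigma_{q'}$ through $0$, and suppose $C$ meets $H$ at some $z\neq 0$. Then $z\in\Sigma_{q'}$, so by the reflection property ($p\in\Sigma_q$ if and only if $q\in\Sigma_p$) we get $q'\in\Sigma_z$; since $q'\notin H$ while (for $z\in H^*$) the component of $\Sigma_z$ through $z$ lies inside $H$, the point $q'$ must lie in $\Sigma_z\setminus H$. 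Hence the "bad" values of $q'$ all lie in $B:=\bigcup_{z\in\mathcal S}(\Sigma_z\setminus H)$, where $\mathcal S=\{z\in H\cap\Delta\setminus\{0\}:\Sigma_z\not\subseteq H\}$.

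The remaining task is to show $\mathcal S$ is small. One knows that $H^{\text{sing}}$ is, away from an isolated set, a real-analytic set of real dimension $\le 2n-3=1$ (structure of singular Levi-flat hypersurfaces, cf.~\cite{singularSetLeviFlat}); and for $z\in H^*$, $\Sigma_z\not\subseteq H$ forces $\Sigma_z$ to have a component besides the leaf, so $z$ lies in the conjugate of the proper complex-analytic locus in $\Delta$ over which $\Sigma_{(\cdot)}$ acquires extra components, whose trace on $H$ is again real-analytic of real dimension $\le 1$. Thus $\mathcal S$ is subanalytic of real dimension $\le 1$; since each $\Sigma_z$ has real dimension $2$, the set $B$ is subanalytic of real dimension $\le 3<4$, so $H\cup B$ is nowhere dense. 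Choosing $q'\in\Delta\setminus(H\cup B)$ arbitrarily close to $0$, any component $C$ of $\Sigma_{q'}$ through $0$ satisfies $C\cap H\cap U=\{0\}$ for a small neighborhood $U$, i.e.\ $C$ is a support curve at $0$.

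The step I expect to be the real obstacle is precisely the thinness of $\mathcal S$: controlling the set of $z\in H$ for which $\Sigma_z$ develops a component not contained in $H$. The fact to exploit is that a complex curve contained in $H$ is contained in a leaf of the Levi foliation, so a component of $\Sigma_z$ meeting $H^*$ either locally coincides with the leaf through its points and hence stays in $H$, or lies in a genuinely smaller "reducibility" locus; irreducibility of the germ of $H$ at $0$ (hence of $\sH$) would make that locus a proper subvariety, which is what allows the dimension count to close.
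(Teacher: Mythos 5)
Your overall strategy matches the paper's: Segre-degeneracy at $0$ forces $\rho(z,0)\equiv 0$, hence every $\Sigma_{q'}$ passes through $0$; by the symmetry $p\in\Sigma_q \iff q\in\Sigma_p$, any $q'$ whose principal component meets $H\setminus\{0\}$ must lie in $\bigcup_{z\in H\setminus\{0\}}\Sigma_z$; so it suffices to show this union is not a neighborhood of $0$. The gap is in how you show that union is thin. You reduce it to the claim that $\mathcal{S}=\{z\in H:\Sigma_z\not\subseteq H\}$ has real dimension at most $1$, justified by asserting that the locus where $\Sigma_{(\cdot)}$ ``acquires extra components'' is a proper complex-analytic subvariety whose trace on $H$ is at most $1$-dimensional. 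None of this is established, and it is not true in the generality you need: since $H$ need not be coherent, $\rho$ can fail to generate the ideal of $H$ at points arbitrarily close to $0$, and at such points $\Sigma_z$ (taken with respect to the fixed $\rho$) genuinely has components outside $H$; nothing in your argument prevents the set of such $z$ from being $2$- or even $3$-dimensional. Even granting your structural assumption that the extra-component locus is a proper complex curve $V$, its trace $V\cap H$ can be $2$-real-dimensional (when $V\subset H$), which gives only $\dim_\R B\le 2+2=4$ and the count no longer closes. The same issue affects your claim that $H^{\text{sing}}$ has dimension $\le 2n-3=1$ away from isolated points: in $\C^2$ the singular set of a Levi-flat hypersurface can be a complex curve, hence $2$-dimensional.

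The paper closes exactly this step by a different mechanism: rather than bounding the set of $z$ whose Segre variety leaves $H$, it bounds the rank of $\pi_z$ restricted to $Q=\pi_w^{-1}(H\setminus\{0\})\subset\sH$. Near a top-dimensional regular point, $Q$ is locally biholomorphic to $\pi_w^{-1}(M_2)$ for the model $M_2=\{\Im z_1=0\}$, where $\pi_z$ has rank $3$; the lower strata and the singular set of each $L_\epsilon=\pi_w^{-1}(K_\epsilon)$ contribute images of dimension at most $3$ as well. Hence each $\pi_z(L_\epsilon)$ is meager, and since a principal component meeting $H\setminus\{0\}$ must meet the spheres $K_\epsilon$ for a whole interval of $\epsilon$, the bad set is contained in the countable union $\bigcup_n\pi_z(L_{1/n})$, which is meager. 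This rank bound is where Levi-flatness actually enters and is the real content of the proposition; your argument replaces it with a stronger, unproven, and in general false statement about $\mathcal{S}$. The rest of your setup (all Segre varieties through $0$, isolation of Segre-degenerate points in $\C^2$, the reflection property) is correct and agrees with the paper.
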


\begin{proof}
The result is local and hence we can work in a small neighborhood of $q$.
Suppose $q=0$, $H \subset \Delta \subset \C^2$, and $\pi_z$ and $\pi_w$
are as in section
\ref{section:prelim}.

Since $0$ is Segre-degenerate, every Segre variety will contain a component
passing through $0$.
We call the components containing $0$ the principal components.
We will show that there exists a principal component of a Segre
variety $\Sigma_p$ that does not intersect $H$ other than at $0$ and is thus
a support curve for $H$ at $0$. 
The set of points whose Segre varieties that intersect $H$ outside of $0$ is
$\pi_z\circ\pi_w^{-1}(H\setminus \{0\})$.  Let $Q=\pi_w^{-1}(H\setminus \{0\})$.

If a principal component of $\Sigma_p$ crosses $H$, then it must intersect
$H$ along a real curve. In particular, this component intersects $H$ on every
$K_\epsilon=H\cap \partial B_\epsilon(0)$ for $0\leq \epsilon<\delta$ for some
$\delta$. That is, by the symmetry property of Segre varieties,
\begin{equation}
p\in \bigcap_{0\leq \epsilon<\delta} \pi_z\circ\pi_w^{-1}(K_\epsilon)=\bigcap_{0\leq \epsilon<\delta}\pi_z(L_\epsilon),
\end{equation}
where $L_\epsilon=\pi_w^{-1}(K_\epsilon)$.

Now, $Q$ is a $5$-dimensional subset of $\mathcal H$. At each point of $Q^*$,
there is a local biholomorphism to $\pi_w^{-1}(M_2)$.
At other regular points $p$, there is a biholomorphism to $\pi_w^{-1}(M_1)$. 
The rank of $\pi_z$ is a real-analytic function that is constant in a neighborhood of a regular point. The rank of $\pi_z$ for $\pi^{-1}_w(M_j)$ is $2j-1$, and therefore the rank of $\pi_z$ at each regular point of $Q$ is at most $3$. 

Since $L_\epsilon\subset Q$, the rank of $\pi_z$ at regular points of
$L_\epsilon$ is at most $3$ as well. Cover $(L_\epsilon)^*$ with countably
many open sets containing regular points. On each such open set $\Omega$,
$\dim_\R \pi_z(\Omega)\leq 3$.  Furthermore, the dimension of the singular set
$(L_\epsilon)^{\text{sing}}$ is at most $3$. Therefore,
 $\dim_\R \pi_z\bigl( (L_\epsilon)^{\text{sing}}\bigr)\leq 3$.
Combining these two observations, we see
that $\pi_z(L_\epsilon)$ is a meagre set for every $\epsilon>0$.

Let $N=\{\frac1n:n\in \N\}$. Any point whose principal component intersects $H$ outside of $0$ will be contained in $\pi_z(L_t)$ for some $t\in N$ and therefore also in
\begin{equation}
S=\bigcup_{t\in N} \pi_z(L_t).
\end{equation}
The set $S$ is meagre as it is a countable union of meagre sets. 
Therefore, most principal components are in fact a support curves for $H$ at
$0$ in the sense that $\bigl(\bigcup_{t\in N} \pi_z(L_t)\bigr)^c$ being co-meagre.
\end{proof}

%%%%%%

The existence of one support curve in one component of the
complement imposes a condition on the curves contained in other components
of the complement, if such curves exist.

\begin{lemma} \label{lemma:twosidedsupport}
Suppose that $H \subset U \subset \C^2$ is a Levi-flat hypersurface
curve-supported at $q$ and the support curve is contained in a component
$\Omega$ of the complement $U \setminus H$.
Then there exists a neighborhood $B_\epsilon(q)$ of $q$ such that every
complex curve $D \subset \Omega^c$ that is closed in $U$
and intersects $\Omega^c\cap B_{\epsilon}(q)$ intersects $H$ at $q$.
\end{lemma}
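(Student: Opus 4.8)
The plan is to exploit the symmetry property of Segre varieties together with the complexification to turn the one-sided support hypothesis into a dimension-counting statement, exactly as in the proof of Proposition~\ref{prop:onesc}. Suppose $C$ is a support curve at $q=0$ contained in $\Omega$, so that $C\cap H=\{0\}$ in some ball. The curve $C$ is (a piece of) a component of a Segre variety, or at least we may replace $C$ by a Segre-variety component through $0$ contained in $\Omega$: since $H$ is Levi-flat and Segre-degenerate at $0$ (this last point needs to be argued — see below), there is a principal component of $\Sigma_p$ for suitable $p$ that avoids $H$ away from $0$, and the argument of Proposition~\ref{prop:onesc} shows that generic $p$ give such a curve lying in $\Omega$. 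Fix such a $p$, so that the principal component of $\Sigma_p$ lies in $\Omega\cup\{0\}$, equivalently $p\in\pi_w\bigl(\pi_z^{-1}(\Omega\cup\{0\})\bigr)$ and $p$ avoids the meagre set $S$ from the previous proof.

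Next I would run the following contrapositive argument. Suppose, toward a contradiction, that no such $\epsilon$ works: then for every $\epsilon>0$ there is a complex curve $D_\epsilon\subset\Omega^c$, closed in $U$, meeting $\Omega^c\cap B_\epsilon(0)$ but with $0\notin D_\epsilon\cap H$ — and since $D_\epsilon$ lies in $\Omega^c$ and is closed in $U$, the parts of $D_\epsilon$ approaching $0$ must accumulate on $H$ (as $\Omega^c\supset H$ and $0$ is in the closure). The key geometric input is that a complex curve contained in $\Omega^c$ and passing near $0$, if it meets $H$ at all near $0$, meets $H$ along a real curve, hence meets every small sphere $K_\epsilon=H\cap\partial B_\epsilon(0)$; by the symmetry property $z\in\Sigma_w\iff w\in\Sigma_z$, the point $p$ whose Segre variety component is $C$ then lies in $\pi_z(L_\epsilon)$ for all small $\epsilon$, where $L_\epsilon=\pi_w^{-1}(K_\epsilon)$. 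But we chose $p\notin S=\bigcup_{t\in N}\pi_z(L_t)$, a contradiction. The remaining case, that $D_\epsilon$ stays in $\Omega^c\setminus H$ near $0$ but still accumulates at $0$, is excluded because then $D_\epsilon\setminus\{0\}$ would be a curve lying in a component of $U\setminus H$ other than $\Omega$ while still passing through $0$ — which is fine — but then $D_\epsilon$ and $C$ would be support curves from two sides, and one shows this still forces $D_\epsilon$ to cross $H$: indeed the principal component $C$ of $\Sigma_p$ separates a neighborhood of $0$ and a curve $D_\epsilon$ in the opposite component must hit $C$ (by positivity of intersection of complex curves in $\C^2$), hence hit $\Sigma_p$, hence by symmetry every point of $D_\epsilon$ has $0\in\Sigma_{D_\epsilon\text{-point}}$; tracking this forces $D_\epsilon$ back through $H$. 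I would phrase this last part via the intersection number: two distinct irreducible complex curve germs in $\C^2$ through $0$ have strictly positive intersection multiplicity at $0$, and this is enough to conclude $D_\epsilon\cap C\ne\emptyset$ near $0$, so $D_\epsilon\cap H\ni 0$ after all (since $C\cap H=\{0\}$... wait — $D_\epsilon$ meets $C$, not necessarily $H$; rather the correct deduction is $D_\epsilon\cap\Sigma_p\ne\{0\}$ is impossible, giving $D_\epsilon\cap\Sigma_p=\{0\}$, which by symmetry puts $0\in\Sigma_r$ for a sequence $r\to$ points of $D_\epsilon$, and one then re-derives that $p\in\pi_z(L_\epsilon)$).

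I would organize the write-up as: (i) reduce to $q=0$ in a polydisc and recall that a curve-supported Levi-flat hypersurface in $\C^2$ is Segre-degenerate at $q$ — this is one implication that should be available once we know $H$ is curve-supported, but to keep the lemma self-contained I would instead assume only what the statement gives and derive the needed meagreness directly; (ii) choose $p$ outside the meagre set $S$ whose principal Segre component equals (a germ of) the given support curve, using Proposition~\ref{prop:onesc}; (iii) set $\epsilon$ so that $B_\epsilon(0)$ is inside the polydisc where the power series of $\rho$ converges and where $C\cap H=\{0\}$; (iv) run the contradiction: a closed curve $D\subset\Omega^c$ meeting $\Omega^c\cap B_\epsilon(0)$ that misses $q$ in $H$ must, by the real-curve-crossing observation and symmetry, put $p\in\bigcap_{0<\delta<\epsilon}\pi_z(L_\delta)\subset S$.

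The main obstacle I expect is step (iv)'s dichotomy — carefully ruling out a curve $D$ that lies entirely in $\Omega^c\setminus H$ near $0$ and yet still forces a contradiction. The clean tool is the positivity of local intersection of complex analytic curves in $\C^2$: the principal component of $\Sigma_p$ is an irreducible germ at $0$, and any complex curve germ at $0$ distinct from it meets it with positive multiplicity, so $D$ cannot avoid $\Sigma_p$ near $0$ unless $D$ shares that component — in which case $D$ lies in $\Omega\cup\{0\}$, contradicting $D\subset\Omega^c$. Making this intersection-theoretic step precise while $D$ is only assumed closed in $U$ (not compact, not a priori irreducible) is the delicate point; I would handle it by passing to an irreducible branch of $D$ through $0$, or, if $D$ does not pass through $0$ at all, by noting $D$ still must accumulate at $H$ hence meet small spheres $K_\delta$, which is all the symmetry argument needs.
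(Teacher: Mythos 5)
There is a genuine gap, on two counts. First, your argument is circular within the paper's development: you need the support curve $C$ to be (a component of) a Segre variety $\Sigma_p$ for a point $p$ outside the meagre set $S$, and for that you invoke Segre-degeneracy of $H$ at $q$. But in the chain of implications this lemma is precisely what is used to get from ``curve-supported'' to ``dicritical'' and hence to ``Segre-degenerate''; the lemma must hold for an \emph{arbitrary} support curve, with no Segre structure assumed. You flag this yourself in step (i) and promise to ``derive the needed meagreness directly,'' but no such derivation is given, and without it the whole symmetry argument has nothing to hang on. Second, the key step in your dichotomy fails. When $0\notin D$, the curve $D\subset\Omega^c$ is genuinely disjoint from $C$ (since $C\setminus\{0\}\subset\Omega$), and positivity of local intersection numbers says nothing about two complex curves that share no point --- disjoint complex curves in $\C^2$ are commonplace. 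Likewise, the deduction ``$D$ meets $K_\epsilon$, hence $p\in\pi_z(L_\epsilon)$'' is a non sequitur: $\pi_z(L_\epsilon)$ consists of points whose \emph{own} Segre varieties meet $K_\epsilon$, and $D$ bears no a priori relation to $\Sigma_p$. So neither branch of your case analysis actually produces a contradiction.

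The missing idea is a pseudoconvexity argument that uses only the hypothesis as given. Since $C$ is a complex curve (a hypersurface in $\C^2$), the set $V=B_\delta(0)\setminus C$ is pseudoconvex for small $\delta$, and $P=-\log\operatorname{dist}(\,\cdot\,,\partial V)$ is plurisubharmonic on $V$ and blows up along $C$. A curve $D$ closed in $U$, contained in $\Omega^c$ and missing $0$, cannot meet $C$ at all, so $D\cap B_\delta(0)$ sits inside $V$; if it enters a small ball $B_\epsilon(0)$ it comes arbitrarily close to $C$ (which passes through $0$), forcing $P|_D$ to have an interior maximum while being nonconstant --- contradicting the maximum principle for subharmonic functions on the curve $D$. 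This replaces your entire Segre-symmetry machinery and requires no degeneracy hypothesis.
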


In particular, if $D \setminus \{ q \}$
is contained in a component of the complement $U \setminus H$, then
$q \in D$ and hence, $D$ is a support curve for $H$.  Note that the lemma
still holds even if $D$ is a subset of $H$.

\begin{proof}
Again, we can work in a small neighborhood of $q$.
Suppose $q=0$, $H \subset \Delta \subset \C^2$.

As $C$ is a complex curve, $V=B_\delta(0)\setminus C$
(for small enough $\delta > 0$) is pseudoconvex.
Therefore, there exists a plurisubharmonic function $P$ on $V$ that blows up on the boundary $\partial V=C\cup \partial B_\epsilon(0)$.
In fact, we may take $P=-\log \delta(z)$ where $\delta(z)$
is the distance to the boundary function.

Let $D$ be a complex curve, closed in $\Delta$,
that does not intersect the component of the complement $\Delta \setminus H$
containing $C$ and suppose $0 \notin D$.

\begin{center}
\begin{tikzpicture}[scale=1.3]

% neighborhoods
\draw[line width=1pt,dotted] (0,0) circle [radius=0.5];
\draw[line width=1pt,dotted] (0,0) circle [radius=1.5];

% hypersurface
\draw[line width=2pt] (-1.21,0.88) to [out=-45,in=140] (0,0);
\draw[line width=2pt] (0,0) to [out=40,in=210] (1.43,0.46) node[right] {$H$};

% curve C
\draw[darkgray,line width=1pt] (-0.681,1.336) to [out=-45,in=105] (0,0);
\draw[darkgray,line width=1pt] (0,0) to [out=77,in=270] (0.68,1.34) node[below left] {$C$};

% curve D
\draw[darkgray,line width=2pt,dashed] plot [smooth] coordinates {(-0.88,-1.21) (0.1,-0.35) (1.42,-0.46)} node[right] {$D$};

% origin
\filldraw (0,0) circle (0.05) node[below] {$0$};

\end{tikzpicture}
\end{center}

Let $B_\epsilon(0)$ be a smaller neighborhood of $0$ containing a subset of $D$.
Restricting $P$ to $D$, gives a plurisubharmonic function with the maximum
occurring at the closest point to $0$, which is an interior point.
The restriction $P|_D$ cannot be constant as $D$ approaches
$\partial B_\delta(0)$ outside of $B_\epsilon(0)$ so $P|_D$ blows up.
This is a contradiction of the maximum modulus principle. 
\end{proof}

For Levi-flat hypersurfaces the Levi foliation guarantees the existence of
curves contained in every component of the complement of $H$ as well as
curves contained in $H$. Near a regular point $p$ of $H$ with $\dim_p H=3$, a
biholomorphism with $M_2$ gives a local foliation of a neighborhood of $p$.
A leaf is contained in a component of Segre variety and does not intersect
$H$ in that neighborhood. As $p \to 0$ the local foliation gives
curves whose closure intersects $H$ at most at $0$. 

Let $L$ be a leaf of a Levi foliation of $H$.
Then $L$ is contained in a component $D$ of $\Sigma_p$ contained in $H$.
Applying Lemma \ref{lemma:twosidedsupport} to $D$ shows that
$0\in D$ and therefore $0\in \bar L$.
In other words, $0$ is a dicritical singularity.

Once a support curve is constructed in the second component of the complement,
Lemma~\ref{lemma:twosidedsupport} applies to curves in
the first component of the complement as well.

We can summarize the two Lemmas and the discussion above with the following theorem:

\begin{thm} \label{thm:TFAE degen}
Let $H\subset \C^2$ be a Levi-flat real-analytic hypersurface. The following are equivalent
    \begin{enumerate}
        \item $p\in H$ is a Segre-degenerate point,
        \item $H$ is curve-supported at $p$, and
        \item $H$ is curve-supported from two sides at $p$.
        \item $p$ is a dicritical singularity of $H$.
    \end{enumerate}
\end{thm}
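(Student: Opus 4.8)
The plan is to assemble Theorem~\ref{thm:TFAE degen} from the pieces already in hand, treating it as a bookkeeping exercise that chains together Proposition~\ref{prop:onesc}, Lemma~\ref{lemma:twosidedsupport}, and the two paragraphs of discussion between them. Concretely, I would prove the cycle of implications $(1)\Rightarrow(2)\Rightarrow(3)\Rightarrow(1)$ together with $(1)\Leftrightarrow(4)$, with the easy direction $(3)\Rightarrow(2)$ being immediate from the definition of curve-support from two sides.

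\medskip

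\emph{$(1)\Rightarrow(2)$.} This is precisely the content of Proposition~\ref{prop:onesc}: if $p$ is Segre-degenerate, then a co-meagre set of principal components of Segre varieties $\Sigma_{p'}$ meet $H$ only at $p$, and any one of them is a support curve.

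\medskip

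\emph{$(1)\Rightarrow(3)$.} Here I use the Levi structure. By $(1)\Rightarrow(2)$ we already have a support curve $C$, lying (after possibly shrinking $U$) in some component $\Omega$ of $U\setminus H$. Near a regular top-dimensional point $p'$ of $H$, a biholomorphism with $M_2$ produces a local foliation by complex curves, each leaf sitting inside one component of the complement; letting $p'\to p$ gives complex curves through (or accumulating at) $p$ lying in a component of $U\setminus H$. If such a curve $D$ lies in $\Omega$ it is already a second support curve there; if it lies in $\Omega^c$, then Lemma~\ref{lemma:twosidedsupport}, applied with the support curve $C\subset\Omega$, forces $p\in D$, so $D$ is a support curve in a different component, giving curve-support from two sides. (One must be slightly careful that the two support curves genuinely land in \emph{distinct} components; since $C\subset\Omega$ and $D\subset\Omega^c$, this is automatic. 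The only subtle point — already handled by the remark after Lemma~\ref{lemma:twosidedsupport} — is that the argument works even when $D\subset H$, which is the case relevant to the leaf $L$.)

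\medskip

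\emph{$(3)\Rightarrow(4)$ and $(4)\Rightarrow(1)$.} Take a leaf $L$ of the Levi foliation of $H^*$; it is contained in a component $D$ of some $\Sigma_{p'}$ with $D\subset H$. Applying Lemma~\ref{lemma:twosidedsupport} (in the form noted: it holds even when $D\subset H$) with respect to a support curve in one component of the complement — which exists once we have $(3)$, hence in particular $(2)$ — we conclude $p\in D$, so $p\in\bar L$. Since this holds for every leaf, and distinct leaves give geometrically different components passing through $p$, we get infinitely many leaves accumulating at $p$: that is $(4)$, $p$ is dicritical. Finally $(4)\Rightarrow(1)$ is recorded in Section~\ref{section:prelim}: a dicritical point is necessarily Segre-degenerate (in the germ sense, hence with respect to $\rho$), since a dicritical point carries infinitely many leaves, forcing $\dim\Sigma_p=n$. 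This closes the cycle and establishes all equivalences; in particular $(3)\Rightarrow(4)\Rightarrow(1)$ completes $(3)\Rightarrow(1)$.

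\medskip

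The main obstacle is not any single implication but the careful threading of the complement's topology in $(1)\Rightarrow(3)$: a singular Levi-flat hypersurface in $\C^2$ may cut a neighborhood into more than two components, so one must argue that the curves harvested from the local foliation near regular points — together with the already-constructed curve $C$ — can be arranged to lie in two \emph{different} components, and that shrinking $U$ does not destroy this. Once the bookkeeping of components is set up correctly, everything else is a direct appeal to the two lemmas and the preliminary facts on dicritical and Segre-degenerate points.
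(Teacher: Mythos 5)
Your proposal is correct and takes essentially the same approach as the paper: the theorem is stated there precisely as a summary of Proposition~\ref{prop:onesc}, Lemma~\ref{lemma:twosidedsupport}, the surrounding discussion of leaves of the Levi foliation, and the preliminary fact that dicritical points are Segre-degenerate, which is exactly the cycle of implications you assemble. The only (harmless) slip is the aside in $(1)\Rightarrow(3)$ that a foliation-derived curve lying in $\Omega$ ``is already a second support curve there''---such a curve need not pass through $p$---but you do not need that case, since two-sided support requires a curve in $\Omega^c$, which Lemma~\ref{lemma:twosidedsupport} handles.
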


\begin{remark}
Strongly pseudoconvex hypersurfaces are curve-supported at $p\in H$,
but Lemma \ref{lemma:twosidedsupport} does not apply, as there are
no curves contained in the strongly pseudoconvex component of the complement.
\end{remark}

%%%%%%%%%%%%%%%%%%%%%%%%%%%%%%%%%%%%%%%%%%%%%%%%%%%%%%%%%%%%%%%%%%%%%%%%%%%%%
\section{Support Curves in \texorpdfstring{$\C^n$}{Cn}}\label{section: Cn}

%%%%%%restrictions do not work in general

A tempting strategy to extend the results to $\C^n$ is to try to restrict
to a 2-dimensional subspace, as the intersection of a Levi-flat hypersurface
with a complex subspace is still Levi-flat (if it is a hypersurface),
and we have already proved a result in $\C^2$.
One needs to be careful, however, when applying Segre techniques 
via restricting subvarieties in $\C^n$ to lower dimensional complex manifolds.
In particular, the restriction of a Segre-degenerate hypersurface
may not be Segre-degenerate.
The reason is that the restriction of the defining
function of $H$ need not generate the ideal of functions vanishing on the
restricted hypersurface.
If that is the case, restrictions of Segre
varieties with respect to $\rho$ need not be Segre varieties with respect to
the defining function of the restriction.

The simplest example of this phenomenon is to restrict 
$H$ to a $1$-dimensional complex plane $P$.
The set $H \cap P$ is in general a real curve in a $1$-dimensional 
complex plane, and such a set is never Segre-degenerate.
If $f(z,\bar z)$ vanishes on $P\cap H\subset \C^1$
such that $f(z,0)=f(0,\bar z)=0$, then
both $z$ and $\bar z$ divide $f$.
Therefore, $\lvert z\rvert^2$ divides $f$ as
well, and $f$ is not a defining function for $P\cap H$. 

However, we can still use the results and techniques used in $\C^2$ to our
advantage. Let $H$ be a Levi-flat hypersurface in $\C^n$ that is curve
supported at $p$. By pulling back via a finite map and restricting
a different hypersurface, we construct a Levi-flat hypersurface
$\widetilde H$ in $\C^2$
which is curve supported at $\widetilde p$ and thus $\widetilde p$ is a
Segre-degenerate point of $\widetilde H$. The construction of  $\widetilde H$ then
guarantees $p$ is a Segre-degenerate point of $H$. 

\begin{prop}\label{Prop:restriction of support curve} 
Let $H\subset U \subset \C^n$ be a Levi-flat hypersurface
that is curve-supported at $q$.
Then $q$ is Segre-degenerate point,
and $H$ is curve-supported from two sides at $q$.
\end{prop}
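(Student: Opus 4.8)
The plan is to reduce the $\C^n$ statement to the already-proved $\C^2$ result (Theorem \ref{thm:TFAE degen}) by a dimension-reduction argument, handling carefully the subtlety flagged in the text: one cannot simply intersect $H$ with a $2$-plane, since the restricted defining function may fail to generate the full ideal, and a Segre-degenerate hypersurface may restrict to a non-degenerate one. Instead I would work with the complexification $\sH \subset \Delta \times \Delta \subset \C^n \times \C^n$, and produce an auxiliary hypersurface $\widetilde H \subset \C^2$ together with a finite holomorphic map $F$ (a composition of a linear projection onto a generic $2$-plane in the $z$-variables, precomposed with a suitable branched covering / normalization map) such that $F$ pulls the support-curve data on $H$ back to support-curve data on $\widetilde H$. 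The point is that $\widetilde H$ is constructed as the image (or preimage) in such a way that its genuine defining function is controlled, so Segre-degeneracy of $\widetilde H$ at $\widetilde p$ genuinely transfers back. First I would fix $q=0$, fix the defining function $\rho$ as in Section \ref{section:prelim}, and let $C$ be a support curve at $0$ lying in a component $\Omega$ of $\Delta \setminus H$; parametrize $C$ by a holomorphic map $\psi\colon \D \to \Delta$ with $\psi(0)=0$ and $\psi(\D)\cap H = \{0\}$.

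The key steps, in order, would be: (1) choose a generic complex $2$-dimensional subspace (or generic linear projection $\pi\colon \C^n \to \C^2$) adapted to $C$, so that $\pi$ restricted to $C$ is an immersion near $0$ and $\pi(C)$ is a genuine $1$-dimensional complex curve; (2) show that the image $\widetilde H := \pi(H)$, or more precisely an appropriate branch of it obtained after normalizing the finite-to-one map $\pi|_H$, is again a Levi-flat real-analytic hypersurface in a neighborhood of $0$ in $\C^2$ — this uses that $\pi|_{H^*}$ is generically a submersion onto a $3$-dimensional set, and that Levi-flatness (pseudoconvexity from both sides of $H^*$) is preserved under generic projection; (3) verify that $\pi(C)$ is a support curve for $\widetilde H$ at $0$, which follows from $\psi(\D)\cap H=\{0\}$ together with genericity ensuring no new intersections are created by the projection; (4) invoke Theorem \ref{thm:TFAE degen} to conclude $0$ is a Segre-degenerate point of $\widetilde H$ and that $\widetilde H$ is curve-supported from two sides at $0$; and finally (5) transfer back: a support curve in the second component of $\C^2 \setminus \widetilde H$ lifts through $\pi$ (or its inverse branches) to a curve in the corresponding component of $\C^n \setminus H$, giving two-sided support for $H$; and Segre-degeneracy of $\widetilde H$ forces Segre-degeneracy of $H$ at $0$ because the principal Segre component of $\widetilde H$ at generic nearby points, being $1$-dimensional, pulls back to an $(n-1)$-dimensional component of $\Sigma_p$ through $0$, whence $\dim \Sigma_0 = n$.

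I expect the main obstacle to be step (2), making sense of $\widetilde H$ as an honest Levi-flat hypersurface in $\C^2$ whose own defining function is controlled well enough that the $\C^2$ theory applies and the conclusion transfers back. The projection $\pi|_H$ is finite-to-one but not injective, so $\pi(H)$ is a priori only a real-analytic set, possibly with the wrong dimension or with components that obscure the support-curve structure; one must argue, using the Levi foliation of $H^*$ and the fact that a support curve exists (so $H$ is already ``thin'' transversally near $0$ in a suitable sense), that a single branch can be isolated and that it inherits Levi-flatness. An alternative to sidestep this is to avoid projecting $H$ itself and instead work entirely on the complexification: restrict $\sH$ to a generic $(\C^2_z \times \C^2_w)$-slice, show the slice is the complexification of a Levi-flat hypersurface $\widetilde H \subset \C^2$ whose defining function is the restricted $\rho$ precisely because the slice is taken generically through the support curve, and then run the $\C^2$ argument there. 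Either way, the genericity lemma — that for a generic choice of $2$-plane the restricted defining function does generate the ideal of the restricted hypersurface, contrary to the special bad example $\abs{z}^2$ in the text — is the crux, and I would spend most of the effort proving exactly that.
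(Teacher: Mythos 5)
There is a genuine gap, and it sits exactly in the step you flag as the crux. Your primary route pushes $H$ forward by a generic linear projection $\pi\colon\C^n\to\C^2$, but for $n\ge 3$ this cannot work even in principle: the fibers of $\pi$ are complex $(n-2)$-planes, which meet the real hypersurface $H$ (of real dimension $2n-1\ge 5$) in sets of positive dimension, so $\pi|_H$ is not finite-to-one and $\pi(H^*)$ has nonempty interior in $\C^2$ --- it is an open set, not a hypersurface (your own phrase ``submersion onto a $3$-dimensional set'' is already inconsistent: a submersion onto $\C^2$ has open image). Consequently any curve through $0$ in $\C^2$, in particular $\pi(C)$, meets $\pi(H)$ in much more than a point, and steps (2) and (3) fail; no choice of ``branch'' repairs this. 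Your alternative route (slicing $H$ or its complexification by a $2$-plane) is the right kind of move, but it is missing the essential device: a support curve $C$ is in general not contained in any complex $2$-plane, and a generic plane section of $C$ is just the point $0$, so the slice $H\cap P$ inherits no support curve and there is nothing to feed into Theorem~\ref{thm:TFAE degen}.

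The paper supplies exactly this missing ingredient. Using the Puiseux parametrization $t\mapsto\bigl(t^k,f_2(t),\dots,f_n(t)\bigr)$ of $C$, one forms the finite map $\varphi(z)=\bigl(z_1^k,z_2+f_2(z_1),\dots,z_n+f_n(z_1)\bigr)$ and pulls $H$ \emph{back} to $G=\varphi^{-1}(H)$, which is again a Levi-flat hypersurface and is now supported by the straight line $L=\{z_2=\dots=z_n=0\}$. Since $L$ lies in the $2$-plane $P=\{z_3=\dots=z_n=0\}$, the slice $G\cap P$ is a Levi-flat hypersurface in $\C^2$ that is curve-supported at $0$, and Theorem~\ref{thm:TFAE degen} applies to it. Note also that the ``genericity lemma'' you propose to spend most of your effort on (that the restricted defining function generates the ideal of the slice) is not what the argument needs: the $\C^2$ theorem is applied to the slice with its \emph{own} defining function; degeneracy of $G$ is then deduced from the fact that infinitely many distinct components of Segre varieties of $G$ pass through $0$; and degeneracy of $H$ follows from $\rho\circ\varphi=ar$ together with finiteness of $\varphi$, which forces $\rho(z,0)\equiv 0$. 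The two-sided support likewise transfers by pushing the two support curves of $G\cap P$ forward under $\varphi$. Without the straightening map $\varphi$, your outline cannot be completed.
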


\begin{proof}
As before, we can work in small neighborhood of $q$ and may assume $q=0$.

Let $C$ be a support curve of $H$ at $0$.
By the Puiseux theorem, after possibly a rotation, $C$ can be
parameterized by
$t \mapsto \bigl(t^k, f_2(t),\dots,f_n(t)\bigr)$ for some integer $k$ and
holomorphic functions $f_2,\dots,f_n$ satisfying $f_i(0)=0$. 

Consider a finite map $\varphi \colon \C^n\rightarrow \C^n$ defined by 
\begin{equation}
\varphi(z_1,\dots,z_n)=\bigl(z_1^k,z_2+f_2(z_1),\dots,z_n+f_n(z_1) \bigr),
\end{equation}
and let $G=\varphi^{-1}(H)$. Then $G$ is a Levi-flat hypersurface.
Let $L=\{z_2=\dots =z_n=0\}$ be a complex line. Then $\varphi(L)=C$ and $L\cap G=\{0\}$ as
\begin{equation}
\varphi(L\cap G)\subset \varphi(L)\cap \varphi(G)=H\cap \varphi(\varphi^{-1}(H))\subset C\cap H\subset\{0\}
\end{equation}
and clearly $0\in L\cap G$. That is, $L$ is a support curve of $G$ at $0$.

Restricting $G$ to a $2$-dimensional complex plane
$P=\{z_3= \dots =z_n=0\}$,
containing $L$, we can view $G\cap P$ as a Levi-flat hypersurface in $\C^2$,
which is curve-supported at $0$.
By Theorem~\ref{thm:TFAE degen}, $G\cap P$ is Segre-degenerate at $0$ and
curve-supported from two sides at $0$.  Embedding the support curves back to
$\C^n$ shows that $G$ is curve-supported from two sides at~$0$. To see that
$G$ is Segre-degenerate, notice that Segre varieties of $G\cap P$ lift to
distinct Segre varieties of $G$ and therefore infinitely many distinct
components of  Segre varieties contain $0$, i.e., $0$ is a Segre-degenerate
point of $G$.

Finally, we show that $H$ has the desired properties as well.
The image of two support curves at $0$ in two different components of $G$
will be support curves at $0$ in two different components of $H$.
Let $r$ be the defining function of $G$ at $0$ and $\rho$ the defining
function of $H$.
Since $0$ is a Segre degenerate point of $G$, $r(z,0)\equiv 0$.
Then $\rho \circ \varphi = ar$ for some real-analytic $a$, and
\begin{equation}
0\equiv a(z,0) r(z,0)=\rho(\varphi(z),\overline{\varphi(0)})=\rho(\varphi(z),0).
\end{equation}
As $\varphi(z)$ is finite, $\rho(z,0)\equiv 0$ and $0$ is a
Segre-degenerate point of $H$.
\end{proof}

Next, suppose $p$ is a Segre-degenerate point of $H$.
Restricting principal components of Segre varieties to
a $2$-dimensional manifold $V$ containing the origin
produces holomorphic curves through the origin.
We show that most of them are, in fact, support curves.

\begin{prop}
Let $H\subset \Delta \subset \C^n$ be a Levi-flat hypersurface,
which is Segre-degenerate at $q$. Then $H$ is curve-supported at $q$.
\end{prop}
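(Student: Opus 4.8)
The plan is to run the argument of Proposition~\ref{prop:onesc} one dimension at a time: since in $\C^n$ the principal components of Segre varieties are $(n-1)$-dimensional rather than curves, I would first slice $\C^n$ by a generic $2$-dimensional complex plane through the base point and then argue as in the two-dimensional case. We may assume $n\geq 3$, since $n=2$ is Proposition~\ref{prop:onesc}.

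Normalize $q=0$ and fix $\rho$ as in Section~\ref{section:prelim}; Segre-degeneracy gives $\rho(z,0)\equiv 0$, so $0\in\Sigma_p$ for every $p$ and each $\Sigma_p$ has a principal component $Z_p$ through $0$. Let $X$ be the Segre-degenerate locus of $H$ with respect to $\rho$, a complex-analytic set of dimension at most $n-2$ containing $0$, and choose a generic $2$-dimensional complex linear subspace $V\subset\C^n$ through $0$. For generic $V$ one has $V\not\subset H$ and $V\cap X=\{0\}$, and $H\cap V$ is a Levi-flat real-analytic hypersurface in $V\cong\C^2$, being the boundary inside $V$ of the pseudoconvex slices of the components of the complement of $H$. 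For $p$ outside a proper analytic subset of $V$ the intersection $Z_p\cap V$ is proper, hence a holomorphic curve through $0$; since $Z_p\cap V\subset Z_p$, it suffices to produce $p\in V$ with $(Z_p\cap V)\cap(H\setminus\{0\})=\emptyset$.

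Suppose $Z_p\cap V$ meets $H$ at some point $q\neq 0$. As $q\notin X$ for generic $V$, the point $q$ is not Segre-degenerate, and near a regular point of $H$ the Levi-flat structure forces the real-analytic set $Z_p\cap V\cap H$ either to equal $Z_p\cap V$ (so $Z_p\cap V\subset H$) or to be a real curve there; hence, as in Proposition~\ref{prop:onesc}, a principal component that crosses $H$ must meet $K_\epsilon:=H\cap V\cap\partial B_\epsilon(0)$ for all small $\epsilon>0$. By the symmetry of Segre varieties this places $p$ in $\bigcap_{0<\epsilon<\delta}\pi_z(L_\epsilon)$, where $L_\epsilon=\pi_w^{-1}(K_\epsilon)$ lies in $\pi_w^{-1}\bigl((H\cap V)\setminus\{0\}\bigr)$. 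Exactly as in Proposition~\ref{prop:onesc}, the Levi-flatness of $H\cap V$ bounds the rank of $\pi_z$ on a dense open subset of each $L_\epsilon$ by $3$, so $\pi_z(L_\epsilon)$, and therefore $\bigcup_{t\in\{1/n:n\in\N\}}\pi_z(L_t)$, is meagre in $V$. Any $p$ in the co-meagre complement that also avoids the proper analytic set where $Z_p\cap V$ degenerates then yields a support curve $Z_p\cap V$ for $H$ at $0$.

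The step I expect to require the most care is the dimension bookkeeping behind the meagreness claim. One subtlety is that $\rho|_V$ need not generate the ideal of $H\cap V$ — precisely the obstruction flagged at the start of this section — so one cannot simply invoke Theorem~\ref{thm:TFAE degen} for $H\cap V$; I expect this to be harmless because the complexification, the projections, and the Segre varieties entering the argument depend only on the zero \emph{set} of $\rho|_V$, which near generic regular points of $H\cap V$ coincides with that of a genuine defining function, so the rank estimate survives, and the exceptional regular points form a lower-dimensional set with lower-dimensional image. The alternative, which avoids $\rho|_V$ altogether, is to keep the ambient principal components $Z_p$ and recast the count over pairs $(p,V)$ in the incidence variety $\{(p,V):p\in V\}$; here the extra freedom in $V$ is what absorbs the dimensions contributed by the fact that, unlike in $\C^2$, the ambient Segre-degenerate locus $X$ can be positive-dimensional, so that $\bigcup_{q\in H\setminus\{0\}}\Sigma_q$ may be all of $\C^n$. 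Either way, once one good pair $(p,V)$ is produced, $Z_p\cap V$ is the desired support curve.
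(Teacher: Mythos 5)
Your proof is essentially the paper's: both arguments slice by a $2$-dimensional complex plane through $q$ chosen to meet the Segre-degenerate locus only at $q$, and then rerun the meagreness/rank argument of Proposition~\ref{prop:onesc} on the slice, with $K_\epsilon$ replaced by $K_\epsilon\cap H\cap V$ and the restricted principal components serving as the candidate support curves. Your additional discussion of the coherence of $\rho|_V$ and of the genericity of the slicing plane only makes explicit subtleties that the paper's proof leaves implicit.
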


\begin{proof}
Again, consider a $H \subset \Delta \subset \C^n$, $\rho$, and $q=0$ as usual.
Segre-degenerate points of $H$ (with respect to $\rho$)
are contained in some complex subvariety $S$
of dimension at most $n-2$. Let $X$ be a $2$-dimensional complex manifold
that intersects $S$ transversally at $q$.
That is, $q$ is the only
Segre-degenerate (in any sense) point of $H$ in $X$. 

Adapting the proof of Proposition~\ref{prop:onesc} to $\widetilde H=H\cap X$
and $\widetilde K_\epsilon=K_\epsilon\cap H\cap X$ shows that there exists a
point $p$ and a principal component $P_p\subset \Sigma_p$ such that $P_p\cap
X$ is a support curve of $H$ at $0$.
\end{proof}

Putting the results together, we obtain a characterization of
Segre-degenerate points of a Levi-flat hypersurface.
Here we mean Segre-degenerate in the germ sense, of course, as we have not
fixed any defining function.

\begin{thm}\label{thm:TFAE Cn}   Let $H\subset \C^n$ be a Levi-flat real-analytic hypersurface, where $n\geq2$. The following are equivalent
    \begin{enumerate}
        \item $p\in H$ is a Segre-degenerate point,
        \item $H$ is curve-supported at $p$, 
        \item $H$ is curve-supported from two sides at $p$, and
        \item $p$ is a dicritical singularity of $H$.
    \end{enumerate}
\end{thm}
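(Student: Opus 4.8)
The plan is to assemble Theorem~\ref{thm:TFAE Cn} directly from the component results already proved in Sections~\ref{section:sc} and~\ref{section: Cn}, arranging the four conditions into a cycle of implications. Concretely, I would prove $(1)\Rightarrow(2)\Rightarrow(3)\Rightarrow(1)$ and then fold $(4)$ in as equivalent to $(1)$. The implication $(1)\Rightarrow(2)$ is exactly the last Proposition of Section~\ref{section: Cn}: if $q$ is Segre-degenerate, then restricting principal components of Segre varieties to a generic $2$-dimensional manifold transverse to the degeneracy locus $S$ yields, by the adaptation of Proposition~\ref{prop:onesc}, a support curve at $q$. The implication $(2)\Rightarrow(3)$ together with $(2)\Rightarrow(1)$ is precisely Proposition~\ref{Prop:restriction of support curve}: pulling back along the finite map $\varphi$ and restricting to a $2$-plane $P$ reduces to the $\C^2$ statement Theorem~\ref{thm:TFAE degen}, which gives two-sided support for $G$, and then the computation $\rho(\varphi(z),0)\equiv 0$ together with finiteness of $\varphi$ transfers both properties back to $H$. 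Finally $(3)\Rightarrow(1)$ is immediate since curve-support from two sides is a strengthening of curve-support, so one may invoke $(2)\Rightarrow(1)$ again; alternatively it is trivial because (2) is a weakening of (3).

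For the dicritical condition $(4)$, I would note that Segre-degeneracy (in the germ sense) is equivalent to dicriticality for Levi-flat hypersurfaces, and that both directions are essentially contained in the discussion of Section~\ref{section:sc}. The easy direction, $(4)\Rightarrow(1)$, is remarked in the Preliminaries: a dicritical point lies in the closure of infinitely many distinct leaves, each leaf sits inside a component of a Segre variety, and infinitely many geometrically distinct leaf-germs through $p$ force infinitely many components of $\Sigma_p$ through $p$, hence $\dim\Sigma_p=n$. For $(1)\Rightarrow(4)$, or more economically $(3)\Rightarrow(4)$, I would reuse the argument following Lemma~\ref{lemma:twosidedsupport}: once $H$ is curve-supported from two sides, Lemma~\ref{lemma:twosidedsupport} applies to any complex curve $D$ lying in $\Omega^c$ and closed in $U$, in particular to any component $D$ of a Segre variety that is contained in $H$ and contains a leaf $L$ of the Levi foliation; the Lemma forces $q\in D$, hence $q\in\bar L$. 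Since the foliation near regular points of $H^*$ has infinitely many geometrically distinct leaves accumulating at $q$, we conclude $q$ is dicritical. Because the $\C^n$ Lemma~\ref{lemma:twosidedsupport} analogue is needed here, I would either state it in full generality for $\C^n$ or observe that the plurisubharmonic-barrier argument (using $P=-\log\delta(z)$ on the pseudoconvex set $B_\delta\setminus C$ and the maximum principle on $D$) is completely dimension-independent.

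I expect the main obstacle to be bookkeeping about \emph{which} notion of Segre-degeneracy is in play. Section~\ref{section: Cn}'s propositions are phrased ``with respect to $\rho$'' in places, while Theorem~\ref{thm:TFAE Cn} is explicitly about the germ sense. The Preliminaries record that for the defining function of the ideal at $0$ these coincide, and that the germ-sense degeneracy locus is contained in a complex subvariety of dimension $\le n-2$, which is exactly the input needed to choose the transverse $2$-manifold $X$ in the $(1)\Rightarrow(2)$ step. So the proof should open by fixing $\rho$ to be the defining function of the ideal of $H$ at $p$, thereby making germ-sense and $\rho$-sense degeneracy interchangeable, and then all cited results apply verbatim. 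A secondary, minor obstacle is making sure the support curves produced for the auxiliary hypersurfaces $G$ and $G\cap P$ genuinely descend to curves for $H$ in distinct components of $U\setminus H$: this is handled by the observation in Proposition~\ref{Prop:restriction of support curve} that $\varphi$ is finite and open onto its image, so it carries distinct complement-components to distinct complement-components, but it is worth stating explicitly rather than leaving implicit.

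\begin{proof}[Proof of Theorem~\ref{thm:TFAE Cn}]
Fix a small polydisc $\Delta$ about $p$ and let $\rho$ be a real-analytic defining function for $H$ near $p$ that generates the ideal of germs vanishing on $H$ at $p$; with this choice, $p$ is Segre-degenerate in the germ sense if and only if it is Segre-degenerate with respect to $\rho$, so the two notions may be used interchangeably below.

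$(1)\Rightarrow(2)$: This is the last Proposition of Section~\ref{section: Cn}.

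$(2)\Rightarrow(3)$ and $(2)\Rightarrow(1)$: This is Proposition~\ref{Prop:restriction of support curve}; the finiteness of the map $\varphi$ there guarantees that $\varphi$ is open onto its image and hence carries the two support curves of $G$ lying in distinct components of the complement to two support curves of $H$ lying in distinct components of $U\setminus H$.

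$(3)\Rightarrow(2)$: Immediate, since curve-support from two sides is in particular curve-support. Combined with the previous item this closes the cycle $(1)\Leftrightarrow(2)\Leftrightarrow(3)$.

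$(4)\Rightarrow(1)$: If $p$ is dicritical, then $p$ lies in the closure of infinitely many geometrically distinct leaves of the Levi foliation of $H^*$. Near a regular point each leaf is contained in a component of a Segre variety, so infinitely many geometrically distinct leaf-germs through $p$ produce infinitely many distinct components of $\Sigma_p$ passing through $p$, whence $\dim_p\Sigma_p=n$ and $p$ is Segre-degenerate.

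$(3)\Rightarrow(4)$: Suppose $H$ is curve-supported from two sides at $p$, with support curve $C$ in a component $\Omega$ of the complement. The argument of Lemma~\ref{lemma:twosidedsupport} applies verbatim in $\C^n$: the set $B_\delta(p)\setminus C$ is pseudoconvex for small $\delta$, so $P=-\log\delta_{B_\delta(p)\setminus C}(z)$ is plurisubharmonic there and blows up along $C$; if a complex curve $D\subset\Omega^c$ that is closed in $\Delta$ meets $\Omega^c\cap B_\epsilon(p)$ for a suitable $\epsilon$ but avoids $p$, then $P|_D$ attains an interior maximum at the point of $D$ nearest $p$ yet is nonconstant, contradicting the maximum principle. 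Hence every such $D$ meets $H$ at $p$. Now let $L$ be any leaf of the Levi foliation of $H^*$ near $p$; $L$ is contained in a component $D$ of some Segre variety with $D\subset H\subset\Omega^c$, so the above forces $p\in D$, hence $p\in\bar L$. Since near regular points of $H^*$ the foliation has infinitely many geometrically distinct leaves accumulating at $p$, the point $p$ is dicritical.

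Thus all four conditions are equivalent.
\end{proof}
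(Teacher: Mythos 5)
Your assembly of the equivalence $(1)\Leftrightarrow(2)\Leftrightarrow(3)$ is exactly the paper's: $(1)\Rightarrow(2)$ is the last proposition of Section~\ref{section: Cn}, $(2)\Rightarrow(1)$ and $(2)\Rightarrow(3)$ are Proposition~\ref{Prop:restriction of support curve}, and $(3)\Rightarrow(2)$ is trivial. Your $(4)\Rightarrow(1)$ also matches the remark in the preliminaries (a leaf $L$ with $p\in\bar L$ satisfies $\bar L\subset\Sigma_q$ for $q\in L$, so $q\in\Sigma_p$ by symmetry; infinitely many geometrically distinct $(n-1)$-dimensional leaves inside $\Sigma_p$ force $\dim\Sigma_p=n$).

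The gap is in $(3)\Rightarrow(4)$. You assert that the plurisubharmonic-barrier argument of Lemma~\ref{lemma:twosidedsupport} is ``completely dimension-independent,'' but it is not: the proof rests on $B_\delta(p)\setminus C$ being pseudoconvex, which holds because in $\C^2$ a support curve $C$ has complex codimension one. For $n\geq 3$ a curve has codimension $n-1\geq 2$, and the complement of an analytic set of codimension at least two is never pseudoconvex (every holomorphic function extends across it by Hartogs), so $-\log\delta(z)$ is not plurisubharmonic there and the maximum-principle contradiction evaporates. Indeed the generalized statement you invoke is false: for the cone $\{\sabs{z_1}=\sabs{z_2}\}\subset\C^3$ with support curve $C=\{z_2=z_3=0\}$, the curve $D=\{z_1=0,\ z_3=c\}$ lies in the closure of the other component, is closed, enters every small ball around $0$ for small $c$, yet misses $0$. (A second, smaller mismatch: even granting a lemma, you apply it to a component $D$ of a Segre variety contained in $H$, which in $\C^n$ is $(n-1)$-dimensional, not a curve.) The repair is to route dicriticality through the two-dimensional slice, as the paper implicitly does: in $G\cap P\subset\C^2$ (or $H\cap X$), Theorem~\ref{thm:TFAE degen} gives dicriticality at the origin, and one then checks that the infinitely many leaves of the sliced hypersurface accumulating at the origin come from infinitely many geometrically distinct leaves of $H$ whose closures contain $p$; alternatively one argues directly from Segre-degeneracy via the symmetry $p\in\Sigma_q\Leftrightarrow q\in\Sigma_p$ as in Pinchuk--Shafikov--Sukhov. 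Either way, some argument relating the leaves upstairs and downstairs is needed and is currently missing.
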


Given the introductory paragraph to this section, a corollary worth
mentioning is the following:

\begin{cor}
Let $H\subset U \subset \C^n$ be a Levi-flat hypersurface
with $S$ the set of Segre-degenerate (germ sense) points of $H$.
Let $X$ be a $k$-dimensional, $k\geq 2$, complex submanifold of $\C^n$ containing
$p\in S$ such that $X \cap H$ is $(2k-1)$-dimensional and $X \cap S$ is
contained in a $(k-2)$-dimensional complex subvariety. Then $X\cap H$ is Segre-degenerate (germ sense).
\end{cor}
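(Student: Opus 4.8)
The corollary should follow from Theorem~\ref{thm:TFAE Cn} together with the slicing construction already used to prove the Proposition preceding it. First I would observe that, since $X\cap H$ is a $(2k-1)$-dimensional real-analytic Levi-flat hypersurface in the $k$-dimensional complex manifold $X$ (with $k\ge 2$), Theorem~\ref{thm:TFAE Cn}, applied inside $X$ locally identified with $\C^k$, reduces the claim to producing a single support curve: it suffices to exhibit a holomorphic curve $C\subset X$ through $p$ with $C\cap(X\cap H)=\{p\}$ in a neighborhood of $p$. Here I use that the intersection of the Levi-flat $H$ with the complex submanifold $X$, being a hypersurface, is again Levi-flat.

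To build $C$, I would slice once more. Because $X\cap S$ is contained in a $(k-2)$-dimensional complex subvariety of the $k$-dimensional $X$, a generic $2$-dimensional complex submanifold $X'\subset X$ through $p$ meets $X\cap S$, and hence $S$, only at $p$ (an isolated, transversal intersection), and can simultaneously be chosen so that $X'\cap H$ is $3$-dimensional, i.e.\ a hypersurface of $X'\cong\C^2$ that, as before, is Levi-flat. Now the pair $(H,X')$ is precisely in the hypothesis of the Proposition preceding Theorem~\ref{thm:TFAE Cn}: $H$ is Segre-degenerate at $p$ and $X'$ is a $2$-dimensional complex manifold through $p$ transversal to the Segre-degenerate locus. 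Its proof, namely the meagreness argument of Proposition~\ref{prop:onesc} carried out for $\widetilde H=H\cap X'$, yields a point $q$ and a principal component $P_q\subset\Sigma_q$ such that $C:=P_q\cap X'$ is a support curve of $H$ at $p$; that is, $C\cap H=\{p\}$ near $p$ and $C\subset X'\subset X$.

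Finally, since $C\subset X$ we have $C\cap(X\cap H)=C\cap H=\{p\}$ in a neighborhood of $p$, so $C$ is a support curve for $X\cap H$ at $p$. By Theorem~\ref{thm:TFAE Cn} applied inside $X\cong\C^k$, the hypersurface $X\cap H$ is Segre-degenerate at $p$ in the germ sense, as desired.

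The main obstacle is the one flagged at the opening of Section~\ref{section: Cn}: restricting a Segre-degenerate $H$ to a complex submanifold need not preserve Segre-degeneracy, because the restriction of a defining function may fail to generate the ideal of the restricted hypersurface. The hypotheses on $X$, that $X\cap S$ is thin and that $X\cap H$ remains a hypersurface, are exactly what allow one to insert a $2$-dimensional slice $X'$ transversal to $S$ and thereby access the $\C^2$ theory through Theorem~\ref{thm:TFAE degen}. The genuinely delicate point within this plan is re-running the meagreness estimate of Proposition~\ref{prop:onesc} inside $X'$ rather than in $\C^2$, where one must check that the relevant images $\pi_z(L_\epsilon)$, formed now from the complexification restricted to $X'$, are still meagre; but this is precisely the work already carried out in the proof of the Proposition preceding Theorem~\ref{thm:TFAE Cn}, so no new estimate is required.
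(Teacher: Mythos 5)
Your proposal is correct and matches the argument the paper intends: the corollary is stated without an explicit proof, but it is meant to follow exactly as you describe, by slicing with a generic $2$-dimensional $X'\subset X$ through $p$ avoiding $S\setminus\{p\}$, running the meagreness argument of Proposition~\ref{prop:onesc} on $H\cap X'$ to produce a support curve lying in $X'\subset X$, and then invoking Theorem~\ref{thm:TFAE Cn} inside $X\cong\C^k$. You also correctly identify the role of the two hypotheses on $X$ (that $X\cap H$ stays a hypersurface and that $X\cap S$ is thin) as precisely what makes the slice admissible.
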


Note that most complex submanifolds $X$ through $p$ of dimension at least 2
satisfy the hypotheses.
    
%%%%%%%%%%%%%%%%%%%%%%%%%%%%%%%%%%%%%%%%%%%%%%%
\section{Applications of Support Curves}\label{section:polyhulls}

As an application, we show that hypersurfaces that are curve-supported from two sides have interesting properties. Let $H\subset U\subset \C^n$ be a hypersurface, not necessarily Levi-flat, that is curve-supported from two sides at $p$.
Support curves lend themselves naturally to the construction of polynomial and rational hulls. 

The existence of support curves allows us to say even more about the analytic structure of these hulls. 

\begin{thm}\label{thm:hullopennbhd}
Let $H\subset U\subset \C^n$ be a hypersurface which is curve-supported at
$p \in H$ and the support curve is contained in the component $V$ of the
complement $U \setminus H$.
Then for some small enough $\epsilon > 0$, there exists an analytic disc in
$U$ attached to $H$ through every $q\in V^c\cap B_\epsilon(p)$.
\end{thm}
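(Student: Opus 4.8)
The plan is to build the attached discs by a Kontinuit\"{a}tssatz-type argument using the support curve $C$ and the plurisubharmonic barrier from the proof of Lemma \ref{lemma:twosidedsupport}. First I would set $q=p=0$ and work in a small ball. Let $C$ be the support curve contained in $V$, and recall that $W := B_\delta(0) \setminus C$ is pseudoconvex for small $\delta$, so it is the domain of holomorphy of $P(z) = -\log \operatorname{dist}(z, \partial W)$, which blows up along $C$ and along $\partial B_\delta(0)$. The key geometric input is that $C \setminus \{0\}$ lies in one component $V$ of $U \setminus H$; thus a small ball around a point $c_0 \in C \setminus \{0\}$, $c_0$ close to $0$, sits entirely inside $V$, hence misses $H$. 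This gives a small disc attached to $H$ in a trivial way (its boundary lies in $V$, away from $H$)\,—\,but what we really want is a continuous family of such discs sweeping from $C$ across $U$.

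The main construction goes as follows. Take a complex line (or affine $2$-plane) $\Lambda$ through the chosen point $c_0 \in C$, transverse to $C$; on $\Lambda$, $C$ cuts out (locally) a single point $c_0$, and $\Lambda \setminus \{c_0\}$ near $c_0$ lies in $V$, so small circles in $\Lambda$ centered at $c_0$ are attached to $H$ (in fact avoid $H$). I would then translate/move this family: consider the family of discs $\varphi_t$ obtained as boundaries of small ``caps'' on varying parallel $2$-planes, or more robustly, use the foliation/sweep argument exactly as in the cone example in the introduction. Concretely, for a target point $q_1 \in V^c \cap B_\epsilon(0)$, I want to produce a continuous path of analytic discs $\{\varphi_t\}_{t\in[0,1]}$, all with boundary in $H$ (or in $U \setminus H$, which forces the boundary into $H$ in the limit), with $\varphi_0$ a constant disc sitting near $C$ and $q_1 \in \varphi_1(\D)$. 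The barrier $P$ restricted to any such disc must, by the maximum principle, attain its max on the boundary; since $P \to \infty$ on $C \subset \partial W$ and the discs stay in $W$ away from $C$, $P$ is bounded on each disc and the family cannot escape $W$ in an uncontrolled way\,—\,this is the mechanism preventing the discs from being ``pushed off'' $C$ and guaranteeing they genuinely pass through interior points of $V^c$.

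Here is the step I expect to be the real obstacle: producing discs whose \emph{boundary actually lands in $H$} (not merely in $\overline{V}$) and that \emph{cover} a full neighborhood, rather than just a lower-dimensional family. In the cone example this is handled by hand (translates of $\{w=s\}$ and $\{z=s\}$ clearly sweep out $\C^2$), but in general one does not have such explicit leaves. The resolution I would pursue: since $H$ is a real hypersurface (real codimension $1$) and $C$ is a complex curve meeting it only at $0$, the curve $C$ ``pierces'' $H$ at $0$; deforming $C$ slightly (translating in the finitely many normal directions) produces nearby complex curves $C_\eta$ that now meet $H$ in a small real $1$-sphere bounding a disc in $C_\eta$, and that disc is attached to $H$. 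As $\eta$ ranges over a small ball in the normal parameter space, the interiors of these attached discs sweep out an open set. One checks this open set contains a neighborhood of $0$ in $V^c$ by a dimension count (the parameter $\eta$ has real dimension $2(n-1)$, the disc contributes $2$ more, and the map to $\C^n \cong \R^{2n}$ is a submersion at generic points because $C$ is transverse enough to $H$ at $0$ after the Puiseux normalization from Proposition \ref{Prop:restriction of support curve}). The barrier argument above is what guarantees none of these discs can be contracted into $V$, i.e. that $0$ really is covered from the $V^c$ side. I would carry out the details by: (1) normalizing $C$ via Puiseux as in Proposition \ref{Prop:restriction of support curve}; (2) writing down the explicit family $C_\eta$ of translates and the sub-disc of each cut out by $H$; (3) verifying the sweep map is open near $0$; and (4) invoking the continuity principle to place the images in $\hat K$ and conclude each $q \in V^c \cap B_\epsilon(0)$ lies on an attached disc.
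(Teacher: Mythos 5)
Your final construction --- translating $C$ to nearby curves that meet $H$ in a compact real curve bounding an analytic disc attached to $H$ --- is exactly the paper's proof. But you make the covering step much harder than it is, and as written it has a gap. You propose to translate only in the $2(n-1)$ ``normal'' directions and then argue that the resulting $2n$-real-parameter sweep map is open near $0$ via a submersion ``at generic points.'' That claim is neither proved nor needed, and genericity would not suffice anyway, since you must hit \emph{every} $q\in V^c\cap B_\epsilon(p)$. The paper's observation is that for a given target $q=p+v\in V^c$ one simply translates by $v=q-p$ itself: the curve $C+v$ passes through $q$ by construction; since $C\cap H=\{p\}$ and $C$ is closed, $C$ stays at positive distance from $H$ outside any small ball around $p$, so for small $v$ the set $(C+v)\cap H$ is compact and confined near $p$, while the far part of $C+v$ lies in $V$ and the point $q$ lies in $V^c$; hence the component of $(C+v)\setminus H$ containing $q$ is relatively compact, and (after filling holes and applying the Riemann mapping theorem to the parameter domain) it is an analytic disc attached to $H$ through $q$. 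No openness lemma is required.

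Two smaller points. First, the plurisubharmonic barrier $P=-\log\operatorname{dist}(\cdot,\partial W)$ plays no role in this theorem; it belongs to Lemma \ref{lemma:twosidedsupport} and, in spirit, to the later rational-hull corollary (where one needs the family of discs to shrink to $p$). Importing it here to argue the discs ``cannot be pushed off $C$'' conflates this statement with that corollary. Second, the remark that ``$C$ pierces $H$ at $0$'' is false for a support curve: by definition $C\setminus\{p\}$ lies in the single component $V$, so $C$ touches $H$ only at $p$ and does not cross it; it is the \emph{translate} $C+v$ with $p+v\in V^c$ that crosses $H$, and that is precisely why its intersection with $H$ is a nonempty compact real curve.
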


\begin{proof}
Let $C$ be a support curve in the component $V$ of the complement.

There exists a small enough neighborhood $\Omega$ of $p$ such that for all $v\in
B_\epsilon(0)$ such that $p+v\in V^c$. Translating $C$ by $v$ yields a curve
$C+v$ through $p+v$ that intersects two components of the complement of $H$.
In particular, $(C+v)\cap H$ is a real curve.  Moreover, by continuity, for
small $v$ this curve is compact.

Parameterize $C$ with $\varphi \colon \D \to \C^n$ and
let $A=\{z\in\D: \varphi(z)\in H\}$. 

\begin{center}
\begin{tikzpicture}[scale=1.3]

\draw[line width=1pt,dotted] (0,0) circle [radius=1.5];

% hypersurface
\draw[line width=2pt] (-1.21,0.88) to [out=-45,in=140] (0,0);
\draw[line width=2pt] (0,0) to [out=40,in=210] (1.43,0.46) node[right] {$H$};

\draw[darkgray,line width=1pt] plot [smooth] coordinates {(-1.42,-0.46) (0,0) (1.42,-0.46)} node[right] {$C$};

\draw[darkgray,line width=1pt] plot [smooth] coordinates {(-1.48,-0.10) (0,0.4) (1.48,-0.10)} node[right] {$C+v$};

\filldraw (0,0.4) circle (0.04) node[above] {$p+v$};
\filldraw (0,0) circle (0.04) node[below] {$p$};

\draw[line width=1pt,dotted] (4.3,0) circle [radius=1.5];
\draw (5.5,1) node [right] {$\mathbb{D}$};

\draw[fill=gray] plot [smooth cycle] coordinates {(4,-0.8) (4.3,-0.4) (5,0) (4.1,1) (3.3,0.3) (3.7,0)}; 
\draw[fill=white] plot [smooth cycle] coordinates {(4,-0.2) (4.2,-0.1) (4.4,0.3) (3.8,0.4) (4,0.1) }; 

\draw (4.35,1) node [right] {$A$};

\end{tikzpicture}
\end{center}

Filling any holes if necessary, $A$ bounds a simply connected open set and
is, therefore, biholomorphic to $\D$ by the Riemann mapping theorem.
For small enough $v$, the image of the interior of $A$ stays within $U$.
In other words, restriction of $C+v$ gives a disc attached to $H$ through
$p+v$ and contained in $U$.
\end{proof}

Even more can be said about hypersurfaces that are curve-supported from two sides:
\begin{cor}\label{cor:polyhullsattacheddisc}
Let $H \subset U \subset \C^n$ be a hypersurface that is curve-supported from two-sides
at $p \in H$. Then, there exists a neighborhood $B_\epsilon(p)$ of $p$ such
that every point $q\in B_\epsilon(p)$ is contained in an analytic disc
in $U$ attached to $H$.
\end{cor}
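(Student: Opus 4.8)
The plan is to combine Theorem \ref{thm:hullopennbhd} applied twice — once for each of the two support curves — and observe that the two resulting families of attached discs together cover a full neighborhood of $p$. Let me spell this out.

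First I would name the data: since $H$ is curve-supported from two sides at $p$, there are support curves $C_1$ and $C_2$ at $p$ and a neighborhood $U$ of $p$ such that $C_1 \setminus \{p\}$ and $C_2 \setminus \{p\}$ lie in distinct topological components $V_1$ and $V_2$ of $U \setminus H$. Next I would apply Theorem \ref{thm:hullopennbhd} to the support curve $C_1$ in the component $V_1$: this yields an $\epsilon_1 > 0$ such that every point $q \in V_1^c \cap B_{\epsilon_1}(p)$ lies on an analytic disc in $U$ attached to $H$. Symmetrically, applying Theorem \ref{thm:hullopennbhd} to $C_2$ in $V_2$ yields an $\epsilon_2 > 0$ such that every point $q \in V_2^c \cap B_{\epsilon_2}(p)$ lies on an analytic disc in $U$ attached to $H$. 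Set $\epsilon = \min\{\epsilon_1, \epsilon_2\}$.

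It then remains to check that $B_\epsilon(p) \subset V_1^c \cup V_2^c$, for then every $q \in B_\epsilon(p)$ is covered by one of the two families. Equivalently, $V_1 \cap V_2 = \emptyset$, which is immediate: $V_1$ and $V_2$ are \emph{distinct} topological components of $U \setminus H$, hence disjoint. Therefore $V_1^c \cup V_2^c = (V_1 \cap V_2)^c = \C^n$, and in particular $B_\epsilon(p) \subset V_1^c \cup V_2^c$. For a point $q \in B_\epsilon(p)$, either $q \notin V_1$, in which case the first family gives an attached disc through $q$, or $q \notin V_2$, in which case the second family does; in the overlap ($q$ lying in neither $V_1$ nor $V_2$, e.g. $q \in H$ or in a third component) both families apply. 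This completes the proof.

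The argument is essentially routine once Theorem \ref{thm:hullopennbhd} is in hand; there is no real obstacle. The only point meriting a moment's care is the one just used — that the two components being distinct forces them to be disjoint, so that their complements cover everything — and the mild bookkeeping of taking the minimum of the two radii so that both invocations of Theorem \ref{thm:hullopennbhd} are simultaneously valid. No separate treatment of points on $H$ itself is needed, since $H \subset V_1^c \cap V_2^c$ and the constant disc (or, if one prefers genuinely nonconstant discs, the disc supplied by Theorem \ref{thm:hullopennbhd} for either family) handles those points as well.
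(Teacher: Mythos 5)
Your proof is correct and matches the paper's intended argument: the corollary is stated as an immediate consequence of Theorem~\ref{thm:hullopennbhd}, obtained exactly as you do by applying that theorem once to each support curve and noting that the disjointness of the two components $V_1$ and $V_2$ forces $V_1^c \cup V_2^c$ to cover a full neighborhood of $p$. The bookkeeping with $\epsilon = \min\{\epsilon_1,\epsilon_2\}$ and the remark about points of $H$ itself are both fine.
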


Examining the proof in greater detail, as $v\rightarrow p$ the discs form a continuous family of attached discs shrinking to $p$. An application of the Argument principle then gives the following corollary:
\begin{cor} 
Let $H\subset U\subset \C^n$ be a hypersurface and $p \in H$. Then:
\begin{enumerate}
    \item If $H$ is curve-supported at $p$ and the support curve lies in
          a component $V$ of the complement $U \setminus H$,
          then $V^c\cap B_\epsilon(p)\subset
           \hat H_{\text{rat}}\subset \hat H$ for $\epsilon > 0$ small enough.

    \item If $H$ is curve supported from two sides at $p$, then
          $B_\epsilon(p) \subset \hat H_{\text{rat}}\subset \hat H$
          for some $\epsilon > 0$ small enough.
\end{enumerate}
\end{cor}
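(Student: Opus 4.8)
The plan is to feed the attached analytic discs constructed in Theorem~\ref{thm:hullopennbhd} and Corollary~\ref{cor:polyhullsattacheddisc} into the two elementary hull facts recorded in Section~\ref{section:prelim}: an analytic disc attached to $H$ lies in $\hat{H}$, and the terminal disc $\varphi_1$ of a continuous family $\{\varphi_t\}_{t\in[0,1]}$ of attached discs with $\varphi_0$ constant satisfies $\varphi_1(\D)\subset\hat{H}_{\text{rat}}$. Since $\hat{H}_{\text{rat}}\subset\hat{H}$ always, in both parts it suffices to prove membership in the rational hull.

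\emph{Part (1).} Fix $q=p+v\in V^c\cap B_\epsilon(p)$, with $\epsilon>0$ as in Theorem~\ref{thm:hullopennbhd}. I would re-run the construction in the proof of that theorem not only at $v$ but along the whole segment $t\mapsto p+tv$, $t\in[0,1]$. Writing $\psi$ for a parametrization of the support curve $C$, for small $t$ the translate $C+tv$ meets $H$ in a compact real-analytic curve whose preimage under $\psi$ bounds, after filling holes, a Jordan domain $A_t\subset\D$; composing a Riemann map $\D\to A_t$, chosen to depend continuously on $t$, with $z\mapsto\psi(z)+tv$ gives a family $\varphi_t\colon\bar{\D}\to U$ of analytic discs attached to $H$, continuous in $t$, with $q\in\varphi_1(\D)$. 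As $t\to0$ the curve $(C+tv)\cap H$ and the domain $A_t$ shrink to $p$, respectively to $\psi^{-1}(p)$, so $\varphi_0$ is the constant disc at $p\in H$. For any polynomial $f$ with no zero on $H$, each $f\circ\varphi_t$ is zero-free on $\partial\D$, so its winding number about the origin is independent of $t$; at $t=0$ it is $0$ because $\varphi_0$ is constant, hence $f\circ\varphi_t$ is zero-free on $\D$ for every $t$, and in particular $f(q)\neq0$. By the description of the rational hull in Section~\ref{section:prelim} this forces $q\in\hat{H}_{\text{rat}}$, proving $V^c\cap B_\epsilon(p)\subset\hat{H}_{\text{rat}}\subset\hat{H}$.

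\emph{Part (2).} Let $C_1,C_2$ be the two support curves, lying in distinct components $V_1,V_2$ of $U\setminus H$. Part (1) applied to each yields $\epsilon_1,\epsilon_2>0$ with $V_i^c\cap B_{\epsilon_i}(p)\subset\hat{H}_{\text{rat}}$. Distinct components are disjoint, so $V_1^c\cup V_2^c=U$; choosing $\epsilon\le\min\{\epsilon_1,\epsilon_2\}$ small enough that $B_\epsilon(p)\subset U$, we get $B_\epsilon(p)=\bigl(V_1^c\cap B_\epsilon(p)\bigr)\cup\bigl(V_2^c\cap B_\epsilon(p)\bigr)\subset\hat{H}_{\text{rat}}\subset\hat{H}$. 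Equivalently, one can feed the attached discs from Corollary~\ref{cor:polyhullsattacheddisc} directly into the family/winding-number step above.

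The step I expect to be the main obstacle is the continuity of the family $\varphi_t$. The domains $A_t$ in Theorem~\ref{thm:hullopennbhd} are produced by filling holes and invoking the Riemann mapping theorem, so one must verify that over a short segment they form simply connected domains varying continuously — for instance in the Carath\'{e}odory kernel sense — and collapsing to $\psi^{-1}(p)$ as $t\to0$, so that the uniformizing maps, and hence the discs $\varphi_t$, can be chosen to vary continuously down to the constant disc at $p$. Once that is in hand, the winding-number argument and the set-theoretic bookkeeping in Part~(2) are routine.
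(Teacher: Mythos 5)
Your proof is correct and follows essentially the same route the paper takes: the paper's "proof" is the one-sentence remark preceding the corollary, namely that the translated support curves yield a continuous family of attached discs shrinking to $p$, to which one applies the argument principle, and your write-up simply fleshes out that sketch (including the continuity of the uniformizing maps, which the paper glosses over) together with the routine observation $V_1^c\cup V_2^c=U$ for part (2).
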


The existence of analytic discs shrinking to a point is crucial for
extending CR functions, by the use of the Kontinuit\"atssatz, which comes in
many forms.  The form we use is the following
(see, e.g.,\ \cite{Ivashkovich:cont}).
By a sequence of sets converging, we mean in the
Hausdorff sense, that is, eventually the elements of the sequence
are within an $\epsilon$ neighborhood of the limit set and vice versa.

\begin{lemma}[Kontinuit\"{a}tssatz or Behnke--Sommer]
Suppose
$\Omega\subset \C^n$ is open and $\{D_k\}_{k=1}^\infty$ is a sequence
of analytic discs in $\Omega$ converging to $D \subset \C^n$
such that $\partial D_k$ converges to $\Gamma \subset\subset \Omega$.
Then $D$ lifts to the envelope of holomorphy, that is,
any $f \in \sO(\Omega)$ can be analytically continued to a neighborhood of
$D$.
\end{lemma}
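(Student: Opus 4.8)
The statement is classical (see \cite{Ivashkovich:cont}); I sketch the route I would take. The plan is to transport the whole configuration to the envelope of holomorphy $\sE(\Omega)$, where the assertion reduces to the \emph{continuity principle} for pseudoconvex domains. Recall that $\sE(\Omega)$ is a Riemann domain spread over $\C^n$, that it is itself a domain of holomorphy, and that there is a canonical open embedding $\iota\colon\Omega\to\sE(\Omega)$ through which every $f\in\sO(\Omega)$ factors, $f=F\circ\iota$ with $F$ holomorphic on $\sE(\Omega)$. Since $\sE(\Omega)$ is a domain of holomorphy it is pseudoconvex, so $u=-\log\operatorname{dist}(\,\cdot\,,\partial\sE(\Omega))$, the distance being measured in $\C^n$ through the projection, is plurisubharmonic on $\sE(\Omega)$.

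Next I would parametrize each disc $D_k$ by a holomorphic map $\varphi_k\colon\bar\D\to\Omega$ and lift it to the analytic disc $\tilde\varphi_k=\iota\circ\varphi_k\colon\bar\D\to\sE(\Omega)$. Because $\partial D_k\to\Gamma\subset\subset\Omega$ in the Hausdorff sense, for all large $k$ the boundary circles $\tilde\varphi_k(\partial\D)$ lie in a single compact set $\tilde K=\iota(\overline N)\subset\subset\sE(\Omega)$, where $N$ is a relatively compact neighborhood of $\Gamma$ in $\Omega$. The maximum principle applied to the subharmonic function $u\circ\tilde\varphi_k$ on $\D$ then gives $u\le\max_{\tilde K}u=:M$ on $\tilde\varphi_k(\bar\D)$, that is, $\operatorname{dist}\bigl(\tilde\varphi_k(\bar\D),\partial\sE(\Omega)\bigr)\ge e^{-M}$ for all large $k$; together with the hypothesis $D_k\to D\subset\C^n$, which confines the projections to a bounded subset of $\C^n$, this shows the lifted discs all lie in one compact set $\tilde K'\subset\subset\sE(\Omega)$. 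A normal-families argument now produces a subsequence with $\varphi_k\to\varphi$ uniformly on $\bar\D$, $\varphi(\bar\D)=D$, and---since $\sE(\Omega)\to\C^n$ is a local biholomorphism and the $\tilde\varphi_k$ stay inside $\tilde K'$---with $\tilde\varphi_k\to\tilde\varphi$ for some holomorphic $\tilde\varphi\colon\bar\D\to\sE(\Omega)$ projecting onto $D$. Hence $D$ lifts to $\sE(\Omega)$: $F$ is holomorphic on an open neighborhood of the compact set $\tilde\varphi(\bar\D)$, whose projection is an open neighborhood of $D$ in $\C^n$ to which $f=F\circ\iota$ extends, and near $\Gamma$ this extension coincides with the original $f$ since $\iota$ is an embedding there.

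The main obstacle is the bookkeeping for the Riemann domain $\sE(\Omega)$---giving precise meaning to ``distance to the boundary'', to lifting the discs, and to the relative compactness of the lifts, and in particular ruling out that the $\tilde\varphi_k$ run off to the ideal boundary of $\sE(\Omega)$; the plurisubharmonic estimate above is exactly the mechanism that prevents this. One can also bypass $\sE(\Omega)$: near a point of $D\setminus\Omega$ the discs $D_k$ together with their boundaries in $\Omega$ form a Hartogs figure, and Hartogs's Kugelsatz---equivalently the classical Behnke--Sommer continuity theorem---delivers the extension, with essentially the same estimates. In the paper we simply cite \cite{Ivashkovich:cont}.
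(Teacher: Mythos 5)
The paper does not actually prove this lemma---it is stated as a known result with a pointer to \cite{Ivashkovich:cont}---so there is no in-paper proof to compare against. What you sketch is the standard proof of the continuity principle via the envelope of holomorphy, and its skeleton is correct and is exactly the right mechanism: lift the discs by $\iota$, use that $u=-\log\operatorname{dist}(\cdot,\partial\sE(\Omega))$ is plurisubharmonic on the Riemann domain of holomorphy $\sE(\Omega)$, and apply the maximum principle to the subharmonic functions $u\circ\tilde\varphi_k$ to push the interiors of the lifted discs a definite distance $\delta:=e^{-M}$ away from the ideal boundary.

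The one step I would not let stand as written is the deduction that the lifted discs all lie in a single compact set $\tilde K'\subset\subset\sE(\Omega)$ because they satisfy $d\ge\delta$ and have bounded projection. For a general Riemann domain over $\C^n$ the set $\{d\ge\delta,\ |\pi|\le R\}$ need not be relatively compact: it can run through infinitely many sheets over a bounded region (think of the infinite spiral over an annulus), so ``distance bounded below plus bounded projection'' does not by itself yield relative compactness, and the subsequent normal-families step also requires a normalization of the parametrizations (and control up to the boundary) before Montel produces a limit disc whose image is all of $D$. Fortunately none of this is needed: a single disc $D_k$ with $k$ large already suffices. Since $d(\tilde\varphi_k(\zeta))\ge\delta$ for every $\zeta\in\bar\D$, the projection $\pi$ maps a sheet $V_\zeta\ni\tilde\varphi_k(\zeta)$ biholomorphically onto $B_\delta\bigl(\varphi_k(\zeta)\bigr)$, and $F\circ\bigl(\pi|_{V_\zeta}\bigr)^{-1}$ is the analytic continuation of $f$ along the path $\varphi_k$ starting from the boundary circle, where it coincides with $f$ because $\partial D_k$ is close to $\Gamma\subset\subset\Omega$. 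As $D_k$ is Hausdorff-close to $D$, every $q\in D$ lies in some $B_{\delta/2}\bigl(\varphi_k(\zeta)\bigr)$, so $f$ continues analytically to a neighborhood of every point of $D$, which is what the lemma asserts (single-valuedness is not claimed here and is handled separately in the paper's application). With that repair---or with the compactness of the lifts properly justified for these particular discs---your sketch is a complete proof; the closing remark about a Hartogs figure near a point of $D\setminus\Omega$ would need more care to make precise and is best left out.
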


Using the families of attached discs previously constructed
and applying Kontinuit\"{a}tssatz,
we prove the following extension property of holomorphic functions.    

\begin{cor} 
Let $H$ be a hypersurface that is curve-supported from two sides at $p$.
Then there exists a neighborhood $\Omega$ of $p$ such that every
$f\in \sO(H)$ can be extended to a holomorphic function on $\Omega$,
that is, there exists an $F \in \sO(\Omega)$ such that
$F|_{\Omega \cap H} = f|_{\Omega \cap H}$.
\end{cor}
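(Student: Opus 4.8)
The plan is to derive this corollary directly from the family of attached analytic discs produced in Corollary~\ref{cor:polyhullsattacheddisc} together with the Kontinuit\"atssatz stated just above. First I would fix a support curve $C_1$ in a component $V_1$ of $U \setminus H$ and a support curve $C_2$ in a distinct component $V_2$. By Theorem~\ref{thm:hullopennbhd} applied to $C_1$, there is an $\epsilon_1 > 0$ so that every $q \in V_1^c \cap B_{\epsilon_1}(p)$ lies on an analytic disc in $U$ attached to $H$; these discs arise as restrictions of the translates $C_1 + v$ for $v$ in a small ball, and as $v \to 0$ the discs shrink to the point $p$ with boundaries shrinking to $\{p\} \subset H$. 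Symmetrically, $C_2$ gives an $\epsilon_2 > 0$ and attached discs through every $q \in V_2^c \cap B_{\epsilon_2}(p)$. Since $V_1$ and $V_2$ are distinct components, $V_1^c \cup V_2^c = U$ (any point not in $H$ lies in some component, hence in the complement of at least one of $V_1, V_2$, and points of $H$ lie in both complements), so for $\epsilon = \min(\epsilon_1,\epsilon_2)$ every point of $B_\epsilon(p)$ lies on some attached disc from one of these two families.

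Next I would fix $f \in \sO(H)$, so $f$ is holomorphic on some open neighborhood $W$ of $H$; shrinking, assume $W \subset U$. For each of the two families, the discs $D_v$ (for $v$ small) have boundary $\partial D_v \subset H \subset W$, and as $v \to 0$ both $D_v$ and $\partial D_v$ converge in the Hausdorff sense to $\{p\}$. Pick the parameter $v$ small enough that $\partial D_v \subset\subset W$ — this is possible uniformly by the compactness and continuity statements in the proof of Theorem~\ref{thm:hullopennbhd}. Applying the Kontinuit\"atssatz with $\Omega = W$, the sequence of discs $D_{v_k}$ with $v_k \to 0$ lifts to the envelope of holomorphy, so $f$ continues holomorphically to a neighborhood of each $D_v$. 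The union of these discs, over both families and over $v$ in a fixed small ball, contains a neighborhood of $p$ by the previous paragraph; call a fixed such neighborhood $\Omega$.

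The remaining point is to check that these local continuations glue to a single well-defined $F \in \sO(\Omega)$, rather than a multivalued continuation. Here I would argue that $\Omega$ can be chosen simply connected (e.g.\ a small ball $B_\epsilon(p)$) and that $W \cup \Omega$ deformation retracts onto $W$ in a way compatible with the continuation, or more concretely: the envelope of holomorphy $\sE(W \cup \Omega)$ of the Hartogs-type figure $W \cup \Omega$ is schlicht over $B_\epsilon(p)$ because $B_\epsilon(p)$ is a ball and the discs fibering it move continuously in a contractible parameter space, so the analytic continuation along any two paths in $\Omega$ agrees. Thus $f$ extends to a single-valued $F$ holomorphic on $\Omega$ with $F|_{\Omega \cap H} = f|_{\Omega \cap H}$.

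I expect the single-valuedness of the extension to be the main obstacle: the Kontinuit\"atssatz as stated only yields continuation to the envelope of holomorphy, which a priori is a branched domain, so one must rule out monodromy. The cleanest route is to observe that the two-parameter families of discs sweep out $B_\epsilon(p)$ in a way that exhibits it as filled by boundaries lying in $H$, so that $W \cup B_\epsilon(p)$ is, up to shrinking, a classical Hartogs figure whose envelope is univalent; alternatively one invokes that $B_\epsilon(p)$ is simply connected and the family of discs through its points is parameterized by a connected (indeed contractible) set, forcing the continuation to be path-independent. Everything else — the covering of $B_\epsilon(p)$ by the two families, the Hausdorff convergence of discs and their boundaries to $p$, and the hypotheses of the Kontinuit\"atssatz — is already in hand from the results above.
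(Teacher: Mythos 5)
Your proposal is correct and follows essentially the same route as the paper: cover a ball $B_\epsilon(p)$ by the two families of attached discs obtained by translating the support curves, apply the Kontinuit\"atssatz to the continuous families shrinking to $p$ to continue $f$ from a neighborhood of $H$ to a neighborhood of each disc, and then argue single-valuedness of the resulting extension on $\Omega$. The only cosmetic difference is in the last step, where the paper grows the radius of a ball of unique continuation using that the continuation paths are transverse to spheres centered at $p$, while you invoke simple connectivity and the contractibility of the disc parameter space; both are informal versions of the same monodromy argument.
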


\begin{proof}
Let $\Omega = B_\epsilon(p)$ be the neighborhood of $p$ from Corollary~\ref{cor:polyhullsattacheddisc}.
That is, for all $q\in \Omega$, there exists a disc $\varphi_q(\D)$ through $q$ that is attached to $H$.
Moreover, such discs come from a continuous family shrinking to $p$.

Let $f \in \sO(H)$, that is, $f$ is holomorphic on some neighborhood $U_f$
of $H$.  The small discs from the family that shrinks to $p$ are
eventually inside $U_f$.
Hence, by the Kontinuit\"{a}tssatz, $f$ extends holomorphically to a neighborhood of
any point in $\varphi_q(\D)$, in
particular, to a neighborhood of $q$ via analytic continuation 
via a path from $p$.
By construction, as the disks are created by translating the support curves,
we can assume that these paths are going away from $p$
in the sense that they are transverse to any sphere centered at $p$.
Such paths fill the ball $\Omega = B_\epsilon(p)$.
We claim that $f$ has a unique holomorphic extension to $\Omega$.
It clearly extends to some ball $B_{\delta}(p) \subset \Omega$.
If $\delta < \epsilon$, then we can cover the sphere $\partial B_\delta(p)$
with small balls through which the function continues uniquely
since it continues along a paths that pass through the sphere
transversally outward, and we hence get unique continuation
to a larger ball, and hence to all of $\Omega$.

\end{proof}

Other versions of such extensions can be proved.
For example, 
the components of $U \setminus H$ for a curve supported $H$ have the same extension property as
a Hartogs triangle.  There is some neighborhood $\Omega$ of $p$
such that if $f$ is holomorphic in a component $V$ of the complement $U \setminus H$
in which $H$ is curve supported,
and $f$ extends holomorphically to an arbitrarily small neighborhood of $p$, then $f$ extends 
holomorphically to all of $\Omega$.  To prove this statement with the
theorem, one needs to perturb $H$ to make it fit within $V$.  This
perturbation is not
difficult as this perturbation need not be real-analytic in order to apply
the proof of the theorem, it only needs to be a hypersurface that preserves
the property of being curve supported by the same curve $C$.

%%%%%%%%%%%%%%%%%%%%%%%%%%%%%%%%%%%%%%%%%%%%%%%%%%%%%%%%%%%%%%%%%%%%%%%%%%%%

\def\MR#1{\relax\ifhmode\unskip\spacefactor3000 \space\fi%
  \href{http://mathscinet.ams.org/mathscinet-getitem?mr=#1}{MR#1}}

\begin{bibdiv}
\begin{biblist}

\bib{realsubbook}{book}{
   author={Baouendi, M. Salah},
   author={Ebenfelt, Peter},
   author={Rothschild, Linda Preiss},
   title={Real submanifolds in complex space and their mappings},
   series={Princeton Mathematical Series},
   volume={47},
   publisher={Princeton University Press, Princeton, NJ},
   date={1999},
   pages={xii+404},
   isbn={0-691-00498-6},
   review={\MR{1668103}},
   doi={10.1515/9781400883967},
}

\bib{BFN:18}{article}{
   author={Beltr\'{a}n, Andr\'{e}s},
   author={Fern\'{a}ndez-P\'{e}rez, Arturo},
   author={Neciosup, Hern\'{a}n},
   title={Existence of dicritical singularities of Levi-flat hypersurfaces
   and holomorphic foliations},
   journal={Geom. Dedicata},
   volume={196},
   date={2018},
   pages={35--44},
   issn={0046-5755},
   review={\MR{3853626}},
   doi={10.1007/s10711-017-0303-4},
}

\bib{foliationExt}{article}{
   author={Brunella, Marco},
   title={Singular Levi-flat hypersurfaces and codimension one foliations},
   journal={Ann. Sc. Norm. Super. Pisa Cl. Sci. (5)},
   volume={6},
   date={2007},
   number={4},
   pages={661--672},
   issn={0391-173X},
   review={\MR{2394414}},
}

\bib{burnsgong:flat}{article}{
   author={Burns, Daniel},
   author={Gong, Xianghong},
   title={Singular Levi-flat real analytic hypersurfaces},
   journal={Amer. J. Math.},
   volume={121},
   date={1999},
   number={1},
   pages={23--53},
   issn={0002-9327},
   review={\MR{1704996}},
}

\bib{CerveauLinsNeto11}{article}{
   author={Cerveau, D.},
   author={Lins Neto, A.},
   title={Local Levi-flat hypersurfaces invariants by a codimension one
   holomorphic foliation},
   journal={Amer. J. Math.},
   volume={133},
   date={2011},
   number={3},
   pages={677--716},
   issn={0002-9327},
   review={\MR{2808329}},
   doi={10.1353/ajm.2011.0018},
}

\bib{pcxwithrealbound}{article}{
   author={Diederich, Klas},
   author={Fornaess, John E.},
   title={Pseudoconvex domains with real-analytic boundary},
   journal={Ann. of Math. (2)},
   volume={107},
   date={1978},
   number={2},
   pages={371--384},
   issn={0003-486X},
   review={\MR{477153}},
   doi={10.2307/1971120},
}

\bib{FP14}{article}{
   author={Fern\'{a}ndez-P\'{e}rez, Arturo},
   title={Levi-flat hypersurfaces tangent to projective foliations},
   journal={J. Geom. Anal.},
   volume={24},
   date={2014},
   number={4},
   pages={1959--1970},
   issn={1050-6926},
   review={\MR{3261727}},
   doi={10.1007/s12220-013-9404-y},
}

\bib{singularSetLeviFlat}{article}{
   author={Lebl, Ji\v{r}\'{\i}},
   title={Singular set of a Levi-flat hypersurface is Levi-flat},
   journal={Math. Ann.},
   volume={355},
   date={2013},
   number={3},
   pages={1177--1199},
   issn={0025-5831},
   review={\MR{3020158}},
   doi={10.1007/s00208-012-0821-1},
}

\bib{Ivashkovich:cont}{article}{
   author={Ivashkovich, S.},
   title={Discrete and continuous versions of the continuity principle},
   journal={J. Geom. Anal.},
   volume={32},
   date={2022},
   number={8},
   pages={Paper No. 226, 27},
   issn={1050-6926},
   review={\MR{4443556}},
   doi={10.1007/s12220-022-00961-9},
}

\bib{Lebl:semianal}{article}{
   author={Lebl, Ji\v{r}\'{\i}},
   title={Segre-degenerate points form a semianalytic set},
   journal={Proc. Amer. Math. Soc. Ser. B},
   volume={9},
   date={2022},
   pages={159--173},
   review={\MR{4407043}},
   doi={10.1090/bproc/99},
}

\bib{dicritical}{article}{
   author={Pinchuk, S. I.},
   author={Shafikov, R. G.},
   author={Sukhov, A. B.},
   title={Dicritical singularities and laminar currents on Levi-flat
   hypersurfaces},
   language={Russian, with Russian summary},
   journal={Izv. Ross. Akad. Nauk Ser. Mat.},
   volume={81},
   date={2017},
   number={5},
       pages={150--164},
   issn={1607-0046},
   translation={
      journal={Izv. Math.},
      volume={81},
      date={2017},
      number={5},
      pages={1030--1043},
      issn={1064-5632},
   },
   review={\MR{3706863}},
   doi={10.4213/im8582},
}

%\bib{Shabat:book}{book}{
%   author={Shabat, B.~V.},
%   title={Introduction to complex analysis. Part II},
%   series={Translations of Mathematical Monographs},
%   volume={110},
%   note={Functions of several variables;
%   Translated from the third (1985) Russian edition by J. S. Joel},
%   publisher={American Mathematical Society},
%   place={Providence, RI},
%   date={1992},
%   pages={x+371},
%   isbn={0-8218-4611-6},
%   review={\MR{1192135}},
%}

\bib{SS15}{article}{
   author={Shafikov, Rasul},
   author={Sukhov, Alexandre},
   title={Germs of singular Levi-flat hypersurfaces and holomorphic
   foliations},
   journal={Comment. Math. Helv.},
   volume={90},
   date={2015},
   number={2},
   pages={479--502},
   issn={0010-2571},
   review={\MR{3351753}},
   doi={10.4171/CMH/361},
}

 \bib{hullsLeviFlat}{article}{
   author={Shafikov, Rasul},
   author={Sukhov, Alexandre},
   title={On local hulls of Levi-flat hypersurfaces},
   journal={Internat. J. Math.},
   volume={32},
   date={2021},
   number={8},
   pages={Paper No. 2150050, 16},
   issn={0129-167X},
   review={\MR{4300440}},
   doi={10.1142/S0129167X21500506},
}

\bib{hullnodisc}{article}{
   author={Stolzenberg, Gabriel},
   title={A hull with no analytic structure},
   journal={J. Math. Mech.},
   volume={12},
   date={1963},
   pages={103--111},
   review={\MR{0143061}},
}

\bib{Stout}{book}{
   author={Stout, Edgar Lee},
   title={Polynomial convexity},
   series={Progress in Mathematics},
   volume={261},
   publisher={Birkh\"{a}user Boston, Inc., Boston, MA},
   date={2007},
   pages={xii+439},
   isbn={978-0-8176-4537-3},
   isbn={0-8176-4537-3},
   review={\MR{2305474}},
   doi={10.1007/978-0-8176-4538-0},
}

\bib{onmapping}{article}{
   author={Webster, S. M.},
   title={On the mapping problem for algebraic real hypersurfaces},
   journal={Invent. Math.},
   volume={43},
   date={1977},
   number={1},
   pages={53--68},
   issn={0020-9910},
   review={\MR{463482}},	
   doi={10.1007/BF01390203},
}

\end{biblist}
\end{bibdiv}

%%%%%%%%%%%%%%%%%%%%%%%%%%%%%%%%%%%%%%%%%%%%%%%%%%%%%%%%%%%%%%%%%%%%%%%%%%%%

\end{document}